 \newtheorem{thm}{Theorem}[section]
 \newtheorem{lem}[thm]{Lemma}
 \newtheorem{prop}[thm]{Proposition}
 \newtheorem{rem}{Remark}
  \newtheorem*{rem*}{Remark}
 \numberwithin{equation}{section}
 \newcommand\eps{\varepsilon}
 \newcommand{\be}[1]{\begin{equation}\label{#1}}
\newcommand{\ee}{\end{equation}}
\def\squarebox#1{\hbox to #1{\hfill\vbox to #1{\vfill}}}
\begin{document}

\title[Symmetric ground states for a  relativistic model for nucleons ]{Symmetric ground states for a stationary relativistic mean-field model for nucleons in the nonrelativistic limit}

\author[M.J. Esteban]{Maria J. Esteban$^1$}
\address{$^1$Ceremade, Universit\'e Paris-Dauphine, Place de Lattre de Tassigny, F-75775 Paris C\'edex 16, France}
\email{esteban@ceremade.dauphine.fr}

\author[S. Rota Nodari]{Simona Rota Nodari$^{2,3}$}
\address{$^2$ CNRS, UMR 7598, Laboratoire Jacques-Louis Lions, F-75005, Paris, France}
\address{
$^3$ UPMC Univ Paris 06, UMR 7598, Laboratoire Jacques-Louis Lions, F-75005, Paris, France}
\email{rotanodari@ann.jussieu.fr}

%\keywords{}

\date{\today}

\begin{abstract}
In this paper we consider a model for a nucleon interacting with the $\omega$ and $\sigma$ mesons in the atomic nucleus. The model is relativistic, but we study it in the nuclear physics nonrelativistic limit, which is of a very different nature from the one of the atomic physics. Ground states with a given angular momentum are shown to exist for a large class of values for the coupling constants and the mesons' masses. Moreover, we show that, for a good choice of parameters, the very striking shapes of mesonic densities inside and outside the nucleus are well described by the solutions of our model.
\end{abstract}

\maketitle

\section{Introduction}

This article is concerned with the existence of ground states for a stationary relativistic mean-field model for atomic nuclei in the nuclear physics nonrelativistic limit. Moreover, a discussion about some interesting qualitative properties of the solutions of the model considered is provided.

Classically, at low energies, nuclear structure is described by quantum mechanical many-body problems of fermions interacting by a nonrelativistic interaction (see for instance \cite{Ring-Schuck}), and as explained in \cite{ring}, this interaction is understood to have its origin in the exchange of mesons between the bare nucleons.  If one introduces relativity into the model, then one has to propose a relativistic Lagrangian describing the point-interaction between the nuclei. But this is quite complicated. One of the chosen alternatives consists in introducing density dependent energy functionals for variational calculations of the Hartree-Fock type. The exchange terms are described by {\it ad-hoc}  ans\"atze, whose parameters are fitted to experimental data. The classical models in this framework are the Gogny and the Skyrme models. These models are nonrelativistic.

Another existing proposition concerns the so-called RMFT (\textit {Relativistic mean-field theory}). The relativistic mean field model is formulated on the basis of two approximations: the mean field approximation and the no-sea. On the one hand, thanks to the mean-field approximation, the fields for the mesons and the photons are treated as classical fields and the nucleons behave as noninteracting particles moving in these mean fields. On the other hand, thanks to the no-sea approximation, negative energy states belonging to the Dirac sea are not considered and the vacuum polarization is neglected.

The relativistic mean field theory is an effective theory: the Lagrangian of the model is an effective Lagrangian with respect to the mean-field and no-sea approximations. Since the effective Lagrangian is not derived rigorously from the no-approximation Lagrangian, the  parameters of the model must be adjusted on experimental data. Therefore, the effects of vacuum polarization as well as the correlation effects are not completely neglected but they are taken into account implicitly through the adjustment of model parameters.

During the last years, the relativistic mean-field theory has received a wide attention due to its successful description of many nuclear phenomena.
It has been shown that the relativistic mean-field model describes successfully the structure of the nucleus and provides a natural explanation for some relativistic effects observed experimentally such as the spin-orbit force. This is why the relativistic mean-field model can be viewed as the relativistic generalization of some nonrelativistic models, such as the Hartree-Fock model with Skyrme or Gogny interaction, where the effective forces, which are not appropriate in a relativistic formulation, are replaced by average potentials representing independent degrees of freedom.

For a more detailed description of the RMFT models, see \cite{ring}. The article \cite{meng} also contains an interesting discussion about these models and their numerical treatment.

Although often used in practice, the models of nuclear physics have rarely been discussed in the  mathematical community. Some nonrelativistic models of nuclear physics (of Hartree-Fock type) have been studied by D. Gogny and P.L. Lions in an article in 1986 (\cite{gognylions}). To  
our knowledge, S. Rota Nodari's paper \cite{rotanodarirmf} contains the first mathematical study of a model from relativistic nuclear physics.

In the RMFT model studied in this paper, we take into consideration the potentials created by the mesons $\sigma$ and $\omega$, defining a medium range attractive interaction and a short range repulsive interaction respectively. We neglect the meson $\rho$  describing the effects depending on the isospin\footnote{Isospin (contraction of isotopic spin) is a quantum number related to the strong interaction. Isospin was introduced by Heisenberg in 1932; he observed that the neutron is almost identical to the proton, apart from the fact that it carries no charge. In particular, their masses are close and they are indistinguishable under the strong interactions. So, the proton and the neutron appear to be two states of the same particle, the nucleon, associated with different isospin projections}, and we omit the photons which are related to the electromagnetic interaction (see \cite{reinhard}, \cite{greinermaruhn}). This static model which is currently known as the 
$\sigma$-$\omega$ model (\cite{waleckasigmaomega}, \cite{walecka}), is given by 
\begin{eqnarray}\label{eqdiracc}
\left[-ic\bm{\alpha}\cdot\nabla+\beta(mc^2+S)+V\right]\psi_j&=&(mc^2-\mu_j)\psi_j\,,\\
\label{eqsigmac}
\left[-\Delta+m_\sigma^2c^2\right]S&=&-g_\sigma^2 c \rho_s\,,\\
\label{eqomegac}
\left[-\Delta+m_\omega^2c^2\right]V&=&g_\omega^2 c \rho_0\,,
\end{eqnarray}
for $j=1,\ldots,A$ with $A$ the number of nucleons. In these equations the functions $\psi_j$ are the wave functions of the nucleons,  $c>0$ is the speed of light, $m$ is the mass of the nucleons, $\beta=\left(\begin{array}{cc}\mathbb{1}& 0\\0&-\mathbb{1}\end{array}\right)$, $\alpha_k=\left(\begin{array}{cc}0& \sigma_k\\\sigma_k&0\end{array}\right)$ $(k=1,2,3)$ and the $\sigma_k$ are the well known Pauli matrices.
The free Dirac operator $$H_c=-ic\bm{\alpha}\cdot\nabla+\beta mc^2$$ acts on $4$-spinors, i.e. functions from $\mathbb{R}^3$ to $\mathbb{C}^4$ and it is self-adjoint in $L^2(\mathbb{R}^3,\mathbb{C}^4)$, with domain $H^1(\mathbb{R}^3,\mathbb{C}^4)$ and form-domain $H^{1/2}(\mathbb{R}^3,\mathbb{C}^4)$. Moreover, $m_\sigma$ and $m_\omega$ are the masses of the $\sigma$ and $\omega$ meson respectively, and $g_\sigma$ and $g_\omega$ are coupling constants. The densities $\rho_s$ and $\rho_0$ are defined by
\begin{align*}
\rho_s=&\sum_{j=1}^{A}\bar{\psi_j}\psi_j:=\sum_{j=1}^{A}\psi_j^*\beta\psi_j,\\
\rho_0=&\sum_{j=1}^{A}\psi_j^*\psi_j.
\end{align*}

In this paper we are interested in the study of the nonrelativistic limit 
 of the above system. In order to do so,  we are going to pass to the limit as $c\to +\infty$ in a certain regime for the parameters and we are going to suppose that the ``energies" of the nucleons are small compared to the speed of light, that is,
 $0\le\mu_j\ll mc^2$. 

Writing $\psi_j=\left(\begin{array}{c}\varphi_j\\ \chi_j\end{array}\right)$, $\varphi_j, \chi_j:\mathbb{R}^3\to \mathbb{C}^2$, the densities $\rho_s$ and $\rho_0$ take the form
\begin{align*}
\rho_s=&\sum_{j=1}^{A}\left(|\varphi_j|^2-|\chi_j|^2\right)\,,\\
\rho_0=&\sum_{j=1}^{A}\left(|\varphi_j|^2+|\chi_j|^2\right)\,,
\end{align*}
and the equation (\ref{eqdiracc}) becomes
\begin{equation}\label{eveqs}
\left\{
\begin{aligned}
&-ic\bm{\sigma}\cdot\nabla\chi_j+(S+V)\varphi_j=-\mu_j\varphi_j\,,\\
&-ic\bm{\sigma}\cdot\nabla\varphi_j-(2mc^2+S-V-\mu_j)\chi_j=0\,,
\end{aligned}
\right.
\end{equation}
with $\bm{\sigma}=(\sigma_1,\sigma_2,\sigma_3)$.

Supposing that the quantity  $2mc^2+S-V-\mu_j$ is never equal to $0$, from the system (\ref{eveqs}), we formally obtain
\begin{equation}\label{eqdiraccdownf}
\chi_j=\frac{-ic\bm{\sigma}\cdot\nabla\varphi_j}{2mc^2+S-V-\mu_j}.
\end{equation}

To obtain the nonrelativistic limit of the system \eqref{eveqs}, it would be useful to expand the denominator in (\ref{eqdiraccdownf}) w.r.t. a small parameter when $c$ tends towards infinity. In atomic physics, the usual procedure consists in considering that  the quantity $\frac{S-V-\mu_j}{2mc^2}$ is small when $c\to+\infty$. In nuclear physics this approximation is not acceptable, because the quantity $S-V$ is large. Indeed, for $c\to+\infty$, one can write
\begin{align*}
S=&-\frac{1}{c}\left(\frac{g_\sigma}{m_\sigma}\right)^2\left[\frac{-\Delta}{m_\sigma^2c^2}+1\right]^{-1}\rho_s=-\frac{1}{c}\left(\frac{g_\sigma}{m_\sigma}\right)^2\rho_s+O\left(\frac{1}{c^3}\right)\,,\\
V=&\frac{1}{c}\left(\frac{g_\omega}{m_\omega}\right)^2\left[\frac{-\Delta}{m_\omega^2c^2}+1\right]^{-1}\rho_0=\frac{1}{c}\left(\frac{g_\omega}{m_\omega}\right)^2\rho_0+O\left(\frac{1}{c^3}\right).
\end{align*}
and, taking into account the physical values of the mesons' masses and of the coupling constants (see \cite{reinhard},\cite{ring}), we can suppose
\begin{align*}
\left(\frac{g_\sigma}{m_\sigma}\right)^2=&\left(\frac{g_\omega}{m_\omega}\right)^2+\lambda c\,,\\
\frac{1}{c}\left(\frac{g_\sigma}{m_\sigma}\right)^2=&\,\,\theta mc^2\,,
\end{align*} 
with $\lambda>0$ small and $\theta>0$. As a consequence, for $c\to+\infty$
\begin{align*}
S+V=& \,\,2\theta mc^2\sum_{j=1}^{A}|\chi_j|^2-\lambda\rho_0+o(1)\,,\\
S-V=& -2\theta mc^2\sum_{j=1}^{A}|\varphi_j|^2+\lambda\rho_0+o(1)\,,
\end{align*}
and so, from \eqref{eveqs},
$$
\left\{
\begin{aligned}
&-ic\bm{\sigma}\cdot\nabla\chi_k+\left(2\theta mc^2\sum_{j=1}^{A}|\chi_j|^2-\lambda\rho_0\right)\varphi_k+\mu_k\varphi_k+o(1)=0,\\
&-ic\bm{\sigma}\cdot\nabla\varphi_k-\left(2mc^2-2\theta mc^2\sum_{j=1}^{A}|\varphi_j|^2+\lambda\rho_0-\mu_k\right)\chi_k+o(1)=0.
\end{aligned}
\right.
$$
Next, if we suppose that $\frac{\lambda\rho_0}{c^2}=o(1)_{c\to+\infty}$, we obtain the limiting system
\be{limiteveqs}
\left\{
\begin{aligned}
&-ic\bm{\sigma}\cdot\nabla\chi_k+\left(2\theta mc^2\sum_{j=1}^{A}|\chi_j|^2-\lambda\rho_0\right)\varphi_k+\mu_k\varphi_k+o(1)_{c\to+\infty}=0,\\
&-i\bm{\sigma}\cdot\nabla\varphi_k-2mc\left(1-\theta \sum_{j=1}^{A}|\varphi_j|^2{+o(1)_{c\to+\infty}}\right)\chi_k+o(1)_{c\to+\infty}=0.
\end{aligned}
\right.
\ee
Finally, using the following rescaling
\begin{align*}
\tilde\varphi_j=&\sqrt{\theta}\varphi_j,\\
\tilde\chi_j=&-2mc\sqrt{\theta}\chi_j,
\end{align*}
and taking $c$ that goes to $+\infty$,
we get 
\begin{equation*}
\left\{
\begin{aligned}
&i\bm{\sigma}\cdot\nabla\tilde\chi_k+\sum_{j=1}^{A}|\tilde\chi_j|^2\tilde\varphi_k-\frac{2m\lambda}{\theta}\sum_{j=1}^{A}|\tilde\varphi_j|^2\tilde\varphi_k+2m\,\mu_k\,\tilde\varphi_k=0,\\
&-i\bm{\sigma}\cdot\nabla\tilde\varphi_k+\left(1-\sum_{j=1}^{A}|\tilde\varphi_j|^2\right)\tilde\chi_k=0.
\end{aligned}
\right.
\end{equation*}
In this paper we study the above system in the particular case when $A=1$, that is,  the case of a single nucleon. This is of course a very particular case, but it is the first step in the study of the multiple particle system which is much more involved and that will be the object of a forthcoming paper.
 The system that we study is the following
\begin{equation}\label{eqdiracnrl}
\left\{
\begin{aligned}
&i\bm{\sigma}\cdot\nabla\chi+|\chi|^2\varphi-a|\varphi|^2\varphi+b\varphi=0,\\
&-i\bm{\sigma}\cdot\nabla\varphi+\left(1-|\varphi|^2\right)\chi=0,
\end{aligned}
\right.
\end{equation}
with $a=\frac{2m\lambda}{\theta}$, $b=2m\mu\,$ and $\,\psi=\left(\begin{aligned}\varphi\\\chi\end{aligned}\right)$. 

Moreover, we are going to consider a particular ansatz which is classical when studying electronic or nucleonic wave functions. It is separable in spherical coordinates and consists in eigenfunctions of the spin-orbit (angular momentum) operator with corresponding eigenvalue equal to $-1$ (see \cite{Thaller}). We will look for solutions of (\ref{eqdiracnrl}) in the particular  form 
\begin{equation}\label{eqsol}
\psi(x)=\left(
\begin{aligned}
&g(r)\left(\begin{aligned}1\\0\end{aligned}\right)\\
&i f(r)\left(\begin{aligned}&\cos\vartheta\\&\sin\vartheta e^{i\phi}\end{aligned}\right)
\end{aligned}
\right)\,,
\end{equation}
where $f$ and $g$ are real valued radial functions. 
Then the equations for $f$ and $g$ are 
\begin{equation}\label{eqrad}
\left\{\begin{aligned}
f'+\frac{2}{r}f&=g(f^2-a g^2+b)\,,\\
g'&=f(1-g^2)\,,
\end{aligned}\right.
\end{equation}
where we assume $f(0)=0$ in order to avoid solutions with singularities at the origin. For any given value of $g(0)$ there is a local solution of (\ref{eqrad}).
Because of the fact that we want the corresponding solution $\psi$ of (\ref{eqdiracnrl}) to be square integrable, we consider only the solutions of (\ref{eqrad}) which go to $(0,0)$ as $r\to+\infty$. 

About this system our main result is the following

\begin{thm}\label{thmexistence} Given $a,b>0$ such that $a-2b>0$, there exists a solution $(f,g)\in\mathcal{C}^1([0,+\infty),\mathbb{R}^2)$ of the system \eqref{eqrad}
such that $f(0)=0$, and there exists a constant $C$ such that for $r>0$
$$
0<-f(r), g(r)\le C\exp(-K_{a,b}r)\,,
$$
with $K_{a,b}=\min\left\{\frac{b}{2},\frac{2a-b}{2a}\right\}$.
\end{thm}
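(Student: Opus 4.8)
The plan is to realise the desired solution as a connecting orbit via a one–parameter shooting argument in $\gamma:=g(0)$, steering the flow with a monotone functional.

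First I would establish the local theory and the key invariant. Writing the first equation as $(r^2f)'=r^2g(f^2-ag^2+b)$ and integrating, a contraction argument on a small interval gives, for each $\gamma$, a unique local solution $(f_\gamma,g_\gamma)$ with $f_\gamma(0)=0$, $g_\gamma(0)=\gamma$, depending continuously on $\gamma$ and satisfying $f_\gamma(r)=\tfrac{r}{3}\gamma(b-a\gamma^2)+o(r)$; in particular $f_\gamma<0$ near $0$ precisely when $\gamma>\sqrt{b/a}$. The central tool is the function
\[ H(f,g)=-\tfrac12 f^2(1-g^2)-\tfrac a4 g^4+\tfrac b2 g^2,\qquad \tfrac{d}{dr}H\big(f(r),g(r)\big)=\tfrac{2}{r}\,f^2(1-g^2), \]
so $H$ is nondecreasing along any orbit as long as $|g|<1$. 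Since $H(0,\gamma)=\tfrac{\gamma^2}{4}(2b-a\gamma^2)$ is negative exactly for $\gamma>\sqrt{2b/a}$, and $\sqrt{2b/a}<1$ because $a>2b$, I restrict the shooting to $\gamma\in(\sqrt{2b/a},1)$, where the orbit starts at $(f,g)=(0,\gamma)$ and immediately enters the strip $\{f<0,\ 0<g<1\}$.

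Next I would set up the shooting dichotomy. While the orbit stays in the strip, the second equation gives $g'=f(1-g^2)<0$, so $g$ remains in $(0,\gamma)$; monotonicity of $H$ then bounds $f^2(1-g^2)$ from above, and as $g\le\gamma<1$ this bounds $f$, so the orbit cannot blow up and stays in a fixed compact set until it exits the strip. Exit can occur only through $f=0$ (call the set of such $\gamma$ the set $\mathcal R$) or through $g=0$ (the set $\mathcal G$); continuous dependence makes $\mathcal R$ and $\mathcal G$ open, they are disjoint, and an endpoint analysis using the sign of $H$ shows that $\gamma$ near $\sqrt{2b/a}$ lies in $\mathcal R$ while $\gamma$ near $1$ lies in $\mathcal G$. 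By connectedness of $(\sqrt{2b/a},1)$ there is $\gamma^\ast$ in neither set, whose orbit remains in the open strip for all $r$.

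The heart of the argument—and the step I expect to be the main obstacle—is to show that this trapped orbit actually tends to the origin. Here $g$ is decreasing and bounded, hence $g\to g_\infty\ge0$, and $g(0)-g_\infty=\int_0^\infty(-f)(1-g^2)\,dr$ with $1-g^2\ge1-\gamma^2>0$ forces $f\in L^1(0,\infty)$; as $f$ is uniformly continuous this yields $f(r)\to0$, and then the first equation gives $g_\infty(b-ag_\infty^2)=0$, i.e. $g_\infty\in\{0,\sqrt{b/a}\}$. The delicate point is to exclude $g_\infty=\sqrt{b/a}$: linearising at the equilibrium $(\sqrt{b/a},0)$ shows it is a \emph{center}, with $\varepsilon:=g-\sqrt{b/a}$ obeying $\varepsilon''+2b(1-\tfrac ba)\varepsilon\approx0$, so any approaching orbit must oscillate about $g=\sqrt{b/a}$, contradicting the monotonicity of $g$; the vanishing $2/r$ term does not alter this leading-order oscillation. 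Hence $g_\infty=0$, and $(f_{\gamma^\ast},g_{\gamma^\ast})\to(0,0)$ with $f<0<g$ throughout.

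It remains to prove the exponential bound. For $r$ large, writing the equations for $-f>0$ and $g>0$ one has the leading balance $(-f)'\le-bg+O(g^3+g f^2)$ and $g'=-(-f)(1-g^2)$; I would then verify that $Ce^{-K_{a,b}r}$ is a supersolution for both fields on $[R,\infty)$ provided the exponent satisfies both $K_{a,b}\le b/2$ and $K_{a,b}\le(2a-b)/(2a)$, the two conditions ensuring that the single exponential dominates each of the two competing nonlinear terms. A comparison principle then gives $0<-f(r),\,g(r)\le Ce^{-K_{a,b}r}$ with $K_{a,b}=\min\{b/2,(2a-b)/(2a)\}$, completing the proof.
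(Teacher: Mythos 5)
Your outline reproduces the overall architecture of the paper's proof (shooting in $\gamma=g(0)$, the energy identity $\frac{d}{dr}H=-\frac{2}{r}f^2(1-g^2)$, a trapped orbit obtained from a dichotomy of open exit sets, exclusion of the spurious limit $\big(0,\sqrt{b/a}\big)$, and the linear combination of $-f$ and $g$ giving the rate $K_{a,b}=\min\{\frac b2,\frac{2a-b}{2a}\}$, which is exactly Lemma \ref{lemexpdecay}'s computation). But there is a genuine gap at the step you yourself would need most: the claim that ``an endpoint analysis using the sign of $H$ shows that $\gamma$ near $1$ lies in $\mathcal{G}$.'' The sign of $H$ cannot decide this. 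For $\gamma$ close to $1$ the initial energy is close to $H(0,1)=\frac{a-2b}{4}>0$ and decreases along the orbit; this is compatible \emph{both} with exit through $g=0$ (where $H=\frac12 f^2\ge 0$) \emph{and} with exit through $f=0$ at some later point $g_*\le\sqrt{b/a}$ (where $H(0,g_*)\le 0$), since the monotone $H$ may simply cross zero on the way down. Indeed, Lemma \ref{lempropertiesI1} of the paper shows that every orbit in the set $I$ (your $\mathcal{R}$) exits precisely with nonpositive energy, so no bookkeeping of signs of $H$ can exclude $\mathcal{R}$ near $\gamma=1$. What actually forces near-$1$ orbits out through $g=0$ is dynamical: as $\gamma\to1$ the orbit lingers near the plateau $g\approx1$ for a time tending to infinity (Lemmas \ref{leminfgammax} and \ref{lemconvgammax}, proved via the explicit solution of Remark \ref{remsolone} and the function $h(r)=\sqrt{\frac1{r^2}+(a-b)}-\sqrt{a-b}\,\mathrm{Coth}(\sqrt{a-b}\,r)$), the $2/r$ term dies out, and the orbit then shadows the conservative flow near the saddle $(-\sqrt{a-b},1)$, along which $-f>\sqrt{a/2}\,g$; this contradicts the barrier $0<-f_x\le\sqrt{a/2}\,g_x$ valid on $(0,r_x)$ for $x\in I$ (Lemma \ref{lempropertiesI2}), via the compactness and winding-number argument of Lemma \ref{lemsupsmaller1}. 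That chain of lemmas occupies most of Section \ref{existence}, and the paper explicitly identifies this boundedness constraint ($\sup I<1$) as the main new difficulty relative to Cazenave--Vazquez; your proposal compresses it into one unsupported clause, and the tool you name for it would fail.

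A secondary, more repairable gap is the exclusion of $g_\infty=\sqrt{b/a}$ for the trapped orbit. ``Linearized center, hence approaching orbits must oscillate, contradicting monotonicity of $g$'' is the right heuristic, but a center of the linearization is not structurally stable, and the $2/r$ term is a (vanishing) damping which a priori could permit a non-oscillatory approach; the sentence asserting that it ``does not alter the leading-order oscillation'' is precisely what must be proved. The paper closes this rigorously in Lemma \ref{lemexpdecay} without any oscillation argument: setting $w=g-\sqrt{b/a}$, one shows $e^{-\frac{a-b}{2a}r}(w+f)$ is non-increasing and tends to $0$, hence $w\ge -f>0$ for large $r$, whence $(-f)'\ge b\,(-f)$, so $-f$ would grow exponentially --- contradicting convergence. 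Your remaining steps (local well-posedness via $(r^2f)'=r^2g(f^2-ag^2+b)$, the invariance $g^2<1$, openness and disjointness of the exit sets, and the final supersolution comparison once $g^2\le\frac{b}{2a}$) are sound and match the paper's Lemmas \ref{lemexistencesol}, \ref{lemA2Bpos}, \ref{lempropertiessol} and the end of Lemma \ref{lemexpdecay}.
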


This theorem and its proof have the same flavor as the main results and proofs in \cite{cazenavevazquez}. In that paper, Cazenave and Vazquez study solutions of the so-called Soler model, which consists in a nonlinear Dirac equation. They also consider solutions separable in spherical coordinates, with the same angular momentum constraint as we do. The main difference between their methods of proof and ours are related to the fact that, as we see below,  in our case there is a boundedness constraint for the initial value of $g$. This creates additional difficulties and another strategy is necessary for the proof.

 The study of the general tridimensional case, without ansatz on the function $\psi$, involves a completely different method of proof and will be done in a separate paper.

This paper is organized as follows. In Section \ref{conditionsparameters} we discuss the possible values of the parameters $a$ and $b$ for finite energy solutions of \eqref{eqrad} to exist. Moreover, some preliminary results about the system \eqref{eqrad} are proved there. Section \ref{existence} contains the proof of our main result, Theorem \ref{thmexistence}. Finally, Section \ref{numerics} is devoted to the presentation of some numerical computations which show the kind of physically interesting qualitative properties enjoyed by the solutions obtained in Section \ref{existence}. Those properties will be discussed and compared with existing experimental data.

\section{Conditions on the parameters and auxiliary results}

 \begin{prop}\label{conditionsparameters}
There is no  nontrivial solution of \eqref{eqrad} such that 
\be{limiting}(f(r), g(r)) \longrightarrow (0,0)\quad\mbox{as}\quad  r\longrightarrow +\infty
\,,
\ee
 unless $a-2b>0$. Moreover, for all the solutions of \eqref{eqrad}  satisfying \eqref{limiting}, we have $g^2(r)<1$ in $[0,+\infty)$. So, in particular, $g(0)$ must be chosen such that $g(0)^2<1$.
  \end{prop}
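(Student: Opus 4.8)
The plan is to treat \eqref{eqrad} as a Hamiltonian system perturbed by a friction-like source coming from the $\frac{2}{r}f$ term. If one drops that term, the system $f'=g(f^2-ag^2+b)$, $g'=f(1-g^2)$ is Hamiltonian for
\[
H(f,g):=-\frac12\,f^2\,(1-g^2)+\frac b2\,g^2-\frac a4\,g^4,
\]
in the sense that $g'=-\partial_f H$ and $f'=\partial_g H$. Reintroducing $\frac2r f$ and differentiating $H$ along a genuine solution of \eqref{eqrad}, the two cross terms cancel and one is left with
\[
\frac{d}{dr}H\bigl(f(r),g(r)\bigr)=\frac{2}{r}\,f(r)^2\,\bigl(1-g(r)^2\bigr),\qquad r>0.
\]
Since $f(0)=0$ and $f\in\mathcal C^1$, the right-hand side is $O(r)$ near $0$, so $H(f(\cdot),g(\cdot))$ extends continuously to $r=0$ with $H(f(0),g(0))=\frac b2 g(0)^2-\frac a4 g(0)^4$; under \eqref{limiting} it tends to $H(0,0)=0$ as $r\to+\infty$. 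The strategy is to read off the sign of $\frac{d}{dr}H$ from the sign of $1-g^2$, conclude that $H$ is monotone, and then compare its two boundary values.

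I would establish the bound $g^2<1$ first, since the monotonicity of $H$ depends on it. Writing $u:=1-g^2$, the second equation in \eqref{eqrad} gives the \emph{linear} equation $u'=-2gg'=-2gf\,u$, whence
\[
1-g(r)^2=\bigl(1-g(0)^2\bigr)\exp\!\left(-2\int_0^r g(s)f(s)\,ds\right).
\]
On any interval of existence the integral is finite and the exponential is strictly positive, so $1-g^2$ keeps the sign of $1-g(0)^2$ throughout. A solution satisfying \eqref{limiting} is defined on $[0,+\infty)$ and has $g(r)^2\to0<1$ for large $r$, so $1-g^2$ cannot be $\le 0$ anywhere; hence $1-g(0)^2>0$ and $g(r)^2<1$ for all $r\ge0$. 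This proves the second assertion, and in particular $g(0)^2<1$.

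With $g^2<1$ in hand, $\frac{d}{dr}H=\frac2r f^2(1-g^2)\ge0$, so $H(f(\cdot),g(\cdot))$ is nondecreasing on $(0,+\infty)$. Being nondecreasing and tending to $0$ at $+\infty$, it satisfies $H(f(0),g(0))\le0$, that is
\[
\frac{g(0)^2}{4}\bigl(2b-a\,g(0)^2\bigr)\le0.
\]
For a nontrivial solution $g(0)\neq0$ (if $g(0)=0$, then $f(0)=g(0)=0$ forces the trivial solution $f\equiv g\equiv0$, by the local uniqueness behind the statement that a local solution is determined by the value $g(0)$), so $g(0)^2>0$ and therefore $a\,g(0)^2\ge 2b$. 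Combining with $g(0)^2<1$ and the positivity of $a$ gives $2b\le a\,g(0)^2<a$, i.e. $a-2b>0$, as claimed.

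The main obstacle is the first, conceptual step: guessing the conserved quantity $H$ for the undamped system and verifying that the $\frac2r f$ term makes $H$ monotone rather than destroying the structure. Once $H$ and the identity for $\frac{d}{dr}H$ are in hand, what remains is the short sign analysis above, with the one subtlety that the two parts must be proved in the right order, the bound $g^2<1$ being exactly what fixes the sign of $\frac{d}{dr}H$.
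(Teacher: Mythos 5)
Your proof is correct, and beyond the central dissipation identity it takes a genuinely different route from the paper's. The identity you derive is exactly the paper's Lemma \ref{lemvarenergy}, up to sign convention (your $H$ is the negative of the paper's, so your energy is nondecreasing where theirs is nonincreasing on the set $\{g^2<1\}$). From there the paper argues by cases on the sign of $a-2b$: for $a-2b<0$ and $a=2b$ it runs first-crossing contradiction arguments on the zero level set of $H$ (Lemmas \ref{lemA2Bneg} and \ref{lemA2Bzero}), and the bound $g^2<1$ is obtained by combining the monotonicity of $F(x)=\frac{a}{4}x^4-\frac{b}{2}x^2$ (Lemmas \ref{lemABpos} and \ref{lemA2Bpos}) with the explicit solutions $g\equiv\pm1$ of Remark \ref{remsolone} to exclude $g(0)^2\ge 1$. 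You replace all of this with two moves: first, the observation that $u=1-g^2$ obeys the \emph{linear} equation $u'=-2fgu$, so the sign of $1-g^2$ is invariant along any solution --- this yields $g^2<1$ simultaneously for every parameter range and disposes of the boundary case $g(0)^2=1$ without the explicit $\mathrm{Coth}$ solution; second, the single monotone-limit comparison $H(0,g(0))\le \lim_{r\to+\infty}H=0$, which gives $2b\le a\,g(0)^2<a$ in one stroke, with no case analysis at all. Your route is shorter, uniform in the parameters, and self-contained; the paper's longer route is not wasted in its own economy, since the barrier lemmas it builds (invariance of $\{g^2>1\}$, the explicit $g\equiv 1$ solution, the admissible set $\mathcal{A}$) are reused repeatedly in the existence proof of Section \ref{existence} (e.g.\ in Lemma \ref{lempropertiessol}, in Step 2 via Lemma \ref{lemABpos}, and in Lemma \ref{lemconvgammax}). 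The one step you should cite rather than gesture at: ruling out a nontrivial solution with $f(0)=g(0)=0$ requires uniqueness for the system despite the singular $\frac{2}{r}$ term, which the paper secures through the integral formulation and contraction mapping of Lemma \ref{lemexistencesol} (cf.\ Remark \ref{remsolzero}); adding that reference closes the only loose thread, and it is a citation, not a gap.
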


To prove the above result, let us first  consider the related conservative system
\begin{equation}\label{eqradcons}
\left\{\begin{aligned}
f'&=g(f^2-a g^2+b)\,,\\
g'&=f(1-g^2)\,.
\end{aligned}\right.
\end{equation}
This system is the Hamiltonian system associated with the energy
\begin{equation}\label{eqenergy}
H(f,g)=\frac{1}{2}f^2(1-g^2)+\frac{a }{4}g^4-\frac{b}{2}g^2.
\end{equation}
As a consequence, to have a complete description of the dynamical system, it is enough to analyze the energy levels of (\ref{eqenergy}). We remind that for every constant $C$,  the $C$-level set is the curve defined by $\Gamma_C=\{(f,g)\;;\; H(f,g)=C\}$. 

\begin{lem}\label{lemenergyprop} For any $a ,b>0$, $H$ has the following properties:
\begin{enumerate}
\item  if $a -b>0$,\begin{enumerate}\item $\nabla H(f,g)=0$ if and only if $(f,g)=(0,0)$, $(f,g)=\left(0,\pm\sqrt{\frac{b}{a }}\right)$, $(f,g)=\left(\pm\sqrt{a -b},1\right)\,$ or if $\,(f,g)=\left(\pm\sqrt{a -b},-1\right)$;
							\item $(f,g)=\left(0,\pm\sqrt{\frac{b}{a }}\right)$ are local minima, and $(f,g)=(0,0)$, $(f,g)=\left(\pm\sqrt{a -b},1\right)$ and $(f,g)=\left(\pm									\sqrt{a -b},-1\right)$ are saddle points of the energy $H$;
			\end{enumerate} 

\item  if $a -b=0$, $\nabla H(f,g)=0$ if and only if $(f,g)=(0,0)$ or $(f,g)=\left(0,\pm1\right)\,$; \\

\item  if $a -b<0$, \begin{enumerate} \item $\nabla H(f,g)=0$ if and only if $(f,g)=(0,0)$ or $(f,g)=\left(0,\pm\sqrt{\frac{b}{a }}\right)$;
							\item $(f,g)=(0,0)$ and $(f,g)=\left(0,\pm\sqrt{\frac{b}{a }}\right)$ are saddle points of the energy $H$.
			\end{enumerate}
\end{enumerate}
\end{lem}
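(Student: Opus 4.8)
The plan is a direct critical-point analysis of the smooth function $H$ defined in \eqref{eqenergy}, based on computing its gradient and Hessian explicitly. First I would compute
\[
\nabla H(f,g)=\bigl(f(1-g^2),\,g(ag^2-f^2-b)\bigr),
\]
and solve $\nabla H=0$. The first component vanishes when $f=0$ or $g^2=1$, and I would treat these two possibilities in turn. If $f=0$, the second equation reduces to $g(ag^2-b)=0$, giving $g=0$ or $g=\pm\sqrt{b/a}$; this produces the points $(0,0)$ and $\bigl(0,\pm\sqrt{b/a}\bigr)$, which exist for all $a,b>0$. If instead $g^2=1$, the second equation becomes $g(a-f^2-b)=0$, and since $g=\pm1\neq0$ this forces $f^2=a-b$. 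Here the sign of $a-b$ enters: the solutions $\bigl(\pm\sqrt{a-b},\pm1\bigr)$ exist only when $a-b>0$, they collapse to $(0,\pm1)$ when $a-b=0$ (at which point they coincide with $\bigl(0,\pm\sqrt{b/a}\bigr)$, since then $\sqrt{b/a}=1$), and they disappear when $a-b<0$. This case distinction is exactly what gives the three regimes (1), (2), (3) of the statement.

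For the classification in the nondegenerate cases, I would compute the Hessian
\[
\mathrm{Hess}\,H(f,g)=\begin{pmatrix} 1-g^2 & -2fg \\ -2fg & 3ag^2-f^2-b \end{pmatrix},
\]
and evaluate its determinant and diagonal entries at each critical point. At $(0,0)$ the Hessian is diagonal with entries $1$ and $-b$, so its determinant $-b$ is negative and the point is a saddle in every case. At $\bigl(0,\pm\sqrt{b/a}\bigr)$ it is again diagonal, with entries $\tfrac{a-b}{a}$ and $2b$; thus when $a-b>0$ both are positive and the point is a strict local minimum, while when $a-b<0$ the determinant is negative and the point is a saddle. Finally, at $\bigl(\pm\sqrt{a-b},\pm1\bigr)$ (only relevant when $a-b>0$) the diagonal entries are $0$ and $2a$ while the off-diagonal entry is nonzero, so the determinant equals $-4(a-b)<0$ and the point is a saddle. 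Collecting these computations yields precisely the assignment of minima and saddles claimed in (1)(b) and (3)(b).

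The computations themselves are elementary, so there is no serious analytic obstacle; the only point requiring care is the boundary case $a-b=0$. There the critical points $\bigl(0,\pm\sqrt{b/a}\bigr)$ and $\bigl(\pm\sqrt{a-b},\pm1\bigr)$ all coincide with $(0,\pm1)$, and the Hessian becomes degenerate there (its $(1,1)$ entry $1-g^2$ vanishes, and the determinant is zero), so the second-derivative test is inconclusive. This is why statement (2) asserts only the \emph{location} of the critical points and offers no local classification, and I would simply record the critical set in that case without attempting to characterize its nature.
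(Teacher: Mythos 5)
Your proof is correct and complete: the gradient $\nabla H(f,g)=\bigl(f(1-g^2),\,g(ag^2-f^2-b)\bigr)$, the Hessian, the case split on the sign of $a-b$, and the classification via the second-derivative test (including the observation that at $a-b=0$ the Hessian degenerates at $(0,\pm 1)$, which is exactly why item (2) asserts no classification) all check out. The paper states this lemma without proof, and your direct critical-point analysis is precisely the elementary computation it leaves implicit.
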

Since we are interested in solutions converging to $(0,0)$ as $r\to+\infty$, we consider the zero level set of $H$. The curve $\Gamma_0=\{(f,g)\;;\; H(f,g)=0\}$ is an algebraic curve of degree $4$ defined by the equation
\begin{equation}\label{eqenergyzero}
a g^4-2(f^2+b)g^2+2f^2=0\,,
\end{equation}
and its behavior depends on the parameters $a$ and $b$. More precisely, the equation (\ref{eqenergyzero}) can be written as 
\begin{equation*}
a \left(g-h_{1,0}(f)\right)\left(g+h_{1,0}(f)\right)\left(g-h_{2,0}(f)\right)\left(g+h_{2,0}(f)\right)=0\,,
\end{equation*}
with $h_{1,0}(f):=\sqrt{\frac{f^2+b+\sqrt{(f^2+b)^2-2a f^2}}{a }}$, $\;h_{2,0}(f):=\sqrt{\frac{f^2+b-\sqrt{(f^2+b)^2-2a f^2}}{a }}$, provided that $h_0(f)=(f^2+b)^2-2a f^2\ge0$. We notice that: 
\begin{enumerate}
\item if $a -2b>0$, then 
\begin{align*}
h_0(f):=&\left(f-\sqrt{a -b+\sqrt{a (a -2b)}}\right)\left(f+\sqrt{a -b+\sqrt{a (a -2b)}}\right)\\
&\left(f-\sqrt{a -b-\sqrt{a (a -2b)}}\right)\left(f+\sqrt{a -b-\sqrt{a (a -2b)}}\right),
\end{align*}
and $h_0(f)\ge 0$ if and only if 
\begin{align*}f\le -\sqrt{a -b+\sqrt{a (a -2b)}}, \, or \ f\ge\sqrt{a -b+\sqrt{a (a -2b)\;}}, \, or\\ 
-\sqrt{a -b-\sqrt{a (a -2b)}}\le f\le \sqrt{a -b-\sqrt{a (a -2b)}}.
\end{align*}

The zero contour line for this case is represented in \textsc{Figure} \ref{figzerocontour1}.
\item if $a -2b=0$, then 
\begin{equation*}
h_0(f)=\left(f-\sqrt{a -b}\right)^2\left(f+\sqrt{a -b}\right)^2\ge 0\,,
\end{equation*}
for all $f$. In this case the equation (\ref{eqenergyzero}) can be written as
$$
(b g^2-f^2)(g^2-1)=0\,,
$$
and the zero contour line is represented in \textsc{Figure} \ref{figzerocontour2}.
\item if $a -2b<0$, then $h_0(f)\ge0$ for all $f$. In this case the zero contour line is defined for all $f$ and it is represented in \textsc{Figure} \ref{figzerocontour3}.
\end{enumerate}
\begin{figure}[h!]
   \begin{minipage}[c]{.47\linewidth}
      \includegraphics{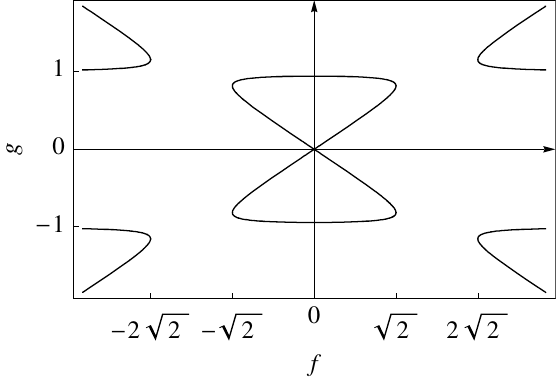}
      \caption{$\Gamma_0$ when $a -2b>0$}
	\label{figzerocontour1}
   \end{minipage}
   \begin{minipage}[c]{.47\linewidth}
      \includegraphics{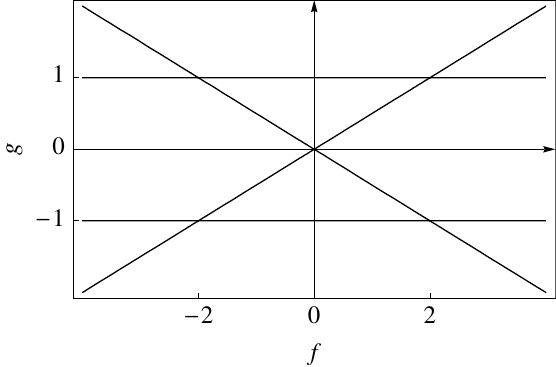}
      \caption{$\Gamma_0$ when $a -2b=0$}
	\label{figzerocontour2}
   \end{minipage}
   \end{figure}
\begin{figure}[h!]
    \begin{minipage}[c]{.47\linewidth}
      \includegraphics{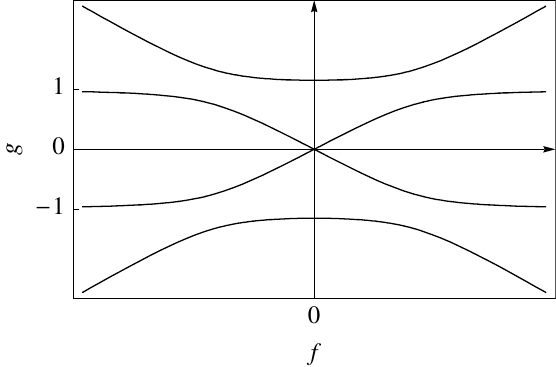}
      \caption{$\Gamma_0$ when $a -2b<0$}
\label{figzerocontour3}
   \end{minipage} \hfill
\end{figure}
More in general, $\Gamma_C$ is the algebraic curve defined by the equation 
\begin{align}\label{eqenergyc}
0&=a g^4-2(f^2+b)g^2+2f^2-4C\nonumber\\
&=a \left(g-h_{1,C}(f)\right)\left(g+h_{1,C}(f)\right)\left(g-h_{2,C}(f)\right)\left(g+h_{2,C}(f)\right)\,,
\end{align}
with 
\begin{align*}
h_{1,C}(f)&:=\sqrt{\frac{f^2+b+\sqrt{(f^2+b)^2-2a (f^2-2C)}}{a }}\,, \\
h_{2,C}(f)&:=\sqrt{\frac{f^2+b-\sqrt{(f^2+b)^2-2a (f^2-2C)}}{a }}\,,
\end{align*}
 and provided that $h_C(f):=(f^2+b)^2-2a (f^2-2C)\ge0$. By a straightforward analysis of the functions $h_{1,C}$, $h_{2,C}$ and using Lemma \ref{lemenergyprop}, we obtain the phase portrait of the dynamical system (\ref{eqradcons}) for different values of the parameters $a $ and $b$. In the pictures \textsc{Figure} \ref{figphaseA2BnegABpos}, \textsc{Figure} \ref{figphaseA2BnegABneg}, \textsc{Figure} \ref{figphaseA2Bzero}, \textsc{Figure} \ref{figphaseABzero} and \textsc{Figure} \ref{figphaseA2Bpos}, the grey part represents the region where the energy is negative. 
\begin{figure}[t!]
\begin{minipage}[c]{.47\linewidth}
      \includegraphics[]{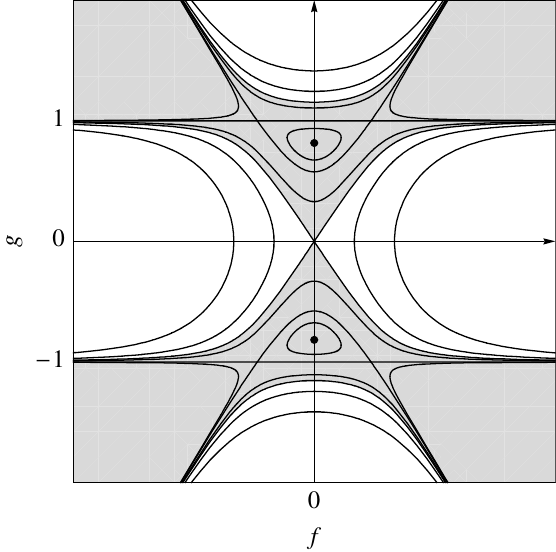}
      \caption{$b<a <2b$}
	\label{figphaseA2BnegABpos}
   \end{minipage}
   \begin{minipage}[c]{.47\linewidth}
      \includegraphics[]{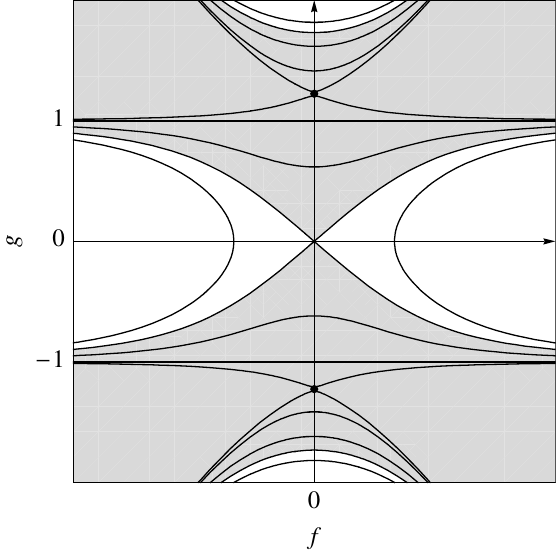}
      \caption{$a <b$}
	\label{figphaseA2BnegABneg}
   \end{minipage}
%  \end{figure}
%\begin{figure}[h!]
\begin{minipage}[c]{.47\linewidth}
      \includegraphics[]{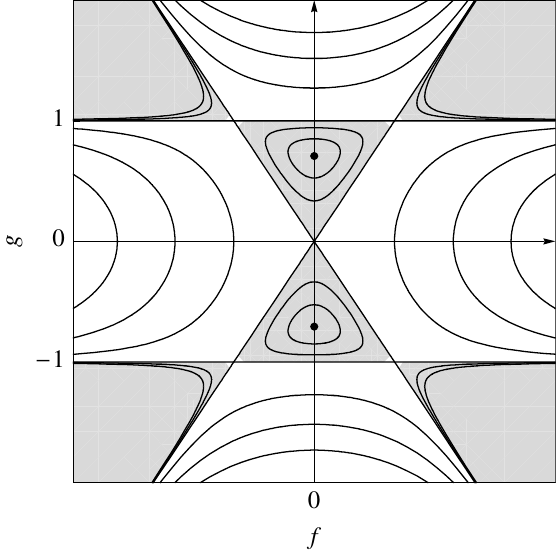}
      \caption{$a -2b=0$}
	\label{figphaseA2Bzero}
   \end{minipage}
   \begin{minipage}[c]{.47\linewidth}
      \includegraphics[]{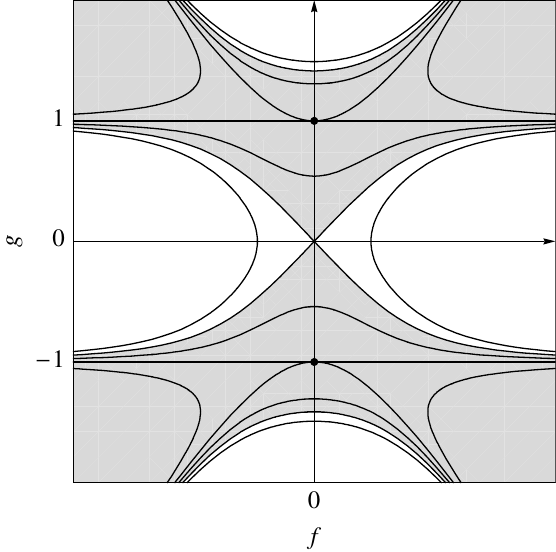}
      \caption{$a -b=0$}
	\label{figphaseABzero}
   \end{minipage}
\end{figure}
\begin{figure}[th!]
\begin{center}
\includegraphics[scale=0.6]{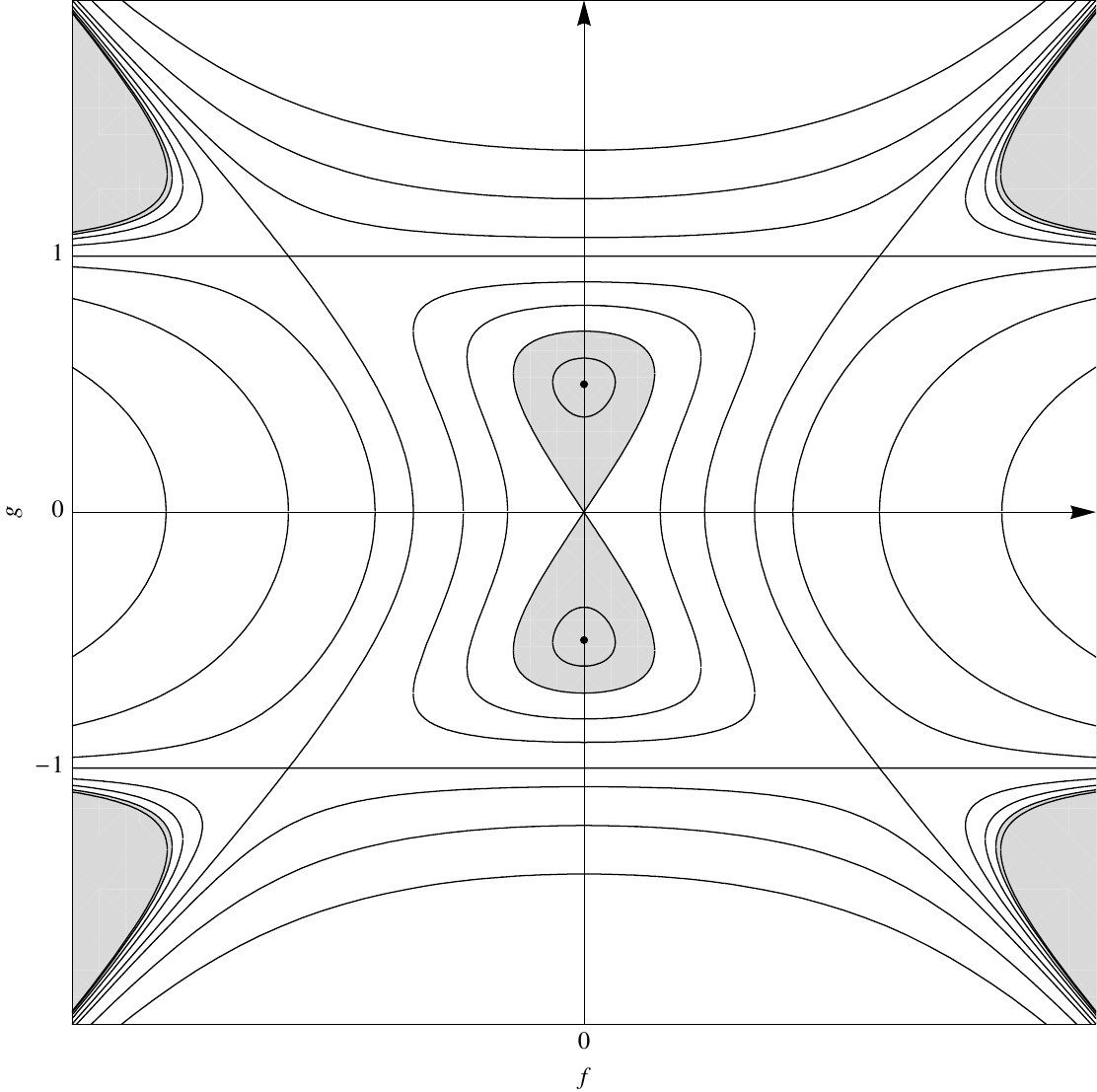}
      \caption{$a -2b>0$}
	\label{figphaseA2Bpos}
\end{center}
\end{figure}
\newpage
Now, we are ready to study the non conservative system (\ref{eqrad}). We begin with the following lemma.
\begin{lem}\label{lemexistencesol}
Let $x\in \mathbb{R}$. For any $a ,b>0$, there is $\tau>0$ and $(f_x,g_x)\in\mathcal{C}^1\left([0,\tau],\mathbb{R}^2\right)$ unique solution of (\ref{eqrad}) satisfying $f_x(0)=0$, $g_x(0)=x$. Moreover, $(f_x,g_x)$ can be extended on a maximal interval $[0,R_x)$ with either $R_x=+\infty$ or $R_x<+\infty$ and $\lim_{r\to { R_x}}|f_x|+|g_x|=+\infty$. Furthermore, $(f_x,g_x)$ depends continuously on $x$, uniformly on $[0,R]$ for any $R<R_x$.
\end{lem}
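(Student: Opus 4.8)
The plan is to treat \eqref{eqrad} as an initial value problem and to overcome the fact that the term $\frac{2}{r}f$ is singular at $r=0$, so that the Cauchy--Lipschitz theorem cannot be applied directly at the origin. The key observation is that the left-hand side of the first equation is $f'+\frac2r f=\frac{1}{r^2}(r^2 f)'$, so that, integrating the first equation against the integrating factor $r^2$ and using $f(0)=0$, the system \eqref{eqrad} with $f(0)=0,\ g(0)=x$ is equivalent to the pair of integral equations
\[
f(r)=\frac{1}{r^2}\int_0^r s^2\,g(s)\bigl(f(s)^2-a\,g(s)^2+b\bigr)\,ds,\qquad
g(r)=x+\int_0^r f(s)\bigl(1-g(s)^2\bigr)\,ds .
\]
First I would record the crucial regularizing estimate: if the integrand of the first equation is bounded by $M$ on $[0,\tau]$, then $|f(r)|\le \frac{r}{3}M$, so that the right-hand side is automatically continuous up to $r=0$, vanishes there, and is small for small $\tau$; this is exactly what removes the singularity.

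Next I would set up a fixed point argument. On the Banach space $\mathcal C([0,\tau],\mathbb R^2)$ endowed with the sup norm, consider the closed ball $B$ of radius $1$ centered at the constant pair $(0,x)$, and define the map $T(f,g)=(\tilde f,\tilde g)$ by the two right-hand sides above. Setting $\rho=|x|+1$, the nonlinearities $g(f^2-a g^2+b)$ and $f(1-g^2)$ are polynomial, hence bounded and Lipschitz on $B$ with constants depending only on $a,b,\rho$. Using the estimate above for $\tilde f$ and the bound $|\tilde g(r)-x|\le \tau\,\rho(1+\rho^2)$ for $\tilde g$, one checks that for $\tau$ small enough $T$ maps $B$ into itself; the same estimates applied to differences show that $T$ is a contraction for $\tau$ small. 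The Banach fixed point theorem then yields a unique fixed point, i.e. a unique continuous solution of the integral system on $[0,\tau]$. A bootstrap argument shows $(f_x,g_x)\in\mathcal C^1$: since the fixed point is continuous the integrands are continuous, so the integral expressions are $\mathcal C^1$; in particular $f(r)/r\to\frac13 x(b-ax^2)$ as $r\to0$, which is consistent with the original differential equation evaluated at the origin.

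For the extension to a maximal interval I would leave the origin behind: for $r\ge\tau>0$ the right-hand side of \eqref{eqrad} is smooth in $(r,f,g)$, so the standard continuation theorem for ordinary differential equations applies with initial data $(f_x(\tau),g_x(\tau))$ and produces a maximal interval $[0,R_x)$ on which the solution is defined, with the usual blow-up alternative: either $R_x=+\infty$, or $R_x<+\infty$ and $\lim_{r\to R_x}(|f_x|+|g_x|)=+\infty$, no other singularity being possible since the coefficients are regular away from $r=0$.

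Finally, continuous dependence follows by combining the two regimes. Near the origin the contraction $T=T_x$ depends Lipschitz-continuously on the parameter $x$ (which enters only through the constant term in $\tilde g$ and through the center of $B$), uniformly for $x$ in a bounded set; a uniformly contracting family has a fixed point that depends continuously on the parameter, which gives continuous dependence of $(f_x,g_x)$ in sup norm on $[0,\tau]$ with $\tau$ chosen uniformly on a neighborhood of the reference value. On $[\tau,R]$, for any $R<R_x$, the classical continuous dependence theorem for the regular system propagates the continuity of $x\mapsto(f_x(\tau),g_x(\tau))$ forward, yielding uniform convergence on $[0,R]$ as claimed. I expect the main obstacle to be the careful treatment at $r=0$---namely verifying that the fixed point built from the integral formulation is genuinely $\mathcal C^1$ up to the origin and solves \eqref{eqrad} there---while the extension and continuous dependence on $[\tau,R]$ are routine once the singularity has been removed.
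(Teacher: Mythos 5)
Your proposal is correct and follows essentially the same route as the paper: the authors likewise rewrite the system in the integral form $f(r)=\frac{1}{r^2}\int_0^r s^2 g(s)\bigl(f^2(s)-a g^2(s)+b\bigr)\,ds$, $g(r)=x+\int_0^r f(s)\bigl(1-g^2(s)\bigr)\,ds$, observe that the nonlinearity is locally Lipschitz in $(f,g)$, and invoke a classical contraction mapping argument (following Cazenave--Vazquez). Your write-up simply fills in the details the paper leaves implicit---the regularizing bound $|f(r)|\le \frac{r}{3}M$ at the origin, the self-mapping and contraction estimates, the continuation alternative away from $r=0$, and the two-regime continuous dependence---all of which are sound.
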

\begin{proof}
As in \cite{cazenavevazquez}, it is enough to write
\begin{align*}
f(r)&=\frac{1}{r^2}\int_{0}^rs^2g(s)(f^2(s)-a g^2(s)+b)\,ds\,,\\
g(r)&=x+\int_{0}^rf(s)(1-g^2(s))\,ds\,,
\end{align*}
and note that the right hand side of (\ref{eqrad}) is a Lipschitz continuous function of $(f,g)$. The lemma follows from a classical contraction mapping argument.
\end{proof}
\begin{rem}\label{remsolzero} For any $a ,b>0$, $(f(r),g(r))\equiv(0,0)$ is the unique continuous solution of (\ref{eqrad}) satisfying $f(0)=0$, $g(0)=0$ and defined on $[0,+\infty)$. 
\end{rem}
\begin{rem}\label{remsolone} For $a-b> 0$, \begin{align}f(r)=&\left\{\begin{aligned}&\frac{1}{r}-\sqrt{a-b}\,\mathrm{Coth}\left(\sqrt{a-b}\,r\right)&r>0\\&0&r=0\end{aligned}\right.\\g(r)=&\,1\end{align} is the unique continuous solution of (\ref{eqrad}) satisfying $f(0)=0$, $g(0)=1$ and defined on $[0,+\infty)$. Moreover, $(-f(r),-g(r))$ is the unique continuous solution of (\ref{eqrad}) satisfying $f(0)=0$, $g(0)=-1$ and defined on $[0,+\infty)$.
\end{rem}

\begin{lem}\label{lemvarenergy}Let $x\in \mathbb{R}$ and $a ,b>0$. Then for any $r\in [0,R_x)$ we have
\begin{equation}\label{eqvarenergy}
\frac{d}{dr}H(f_x(r),g_x(r))=-\frac{2}{r}f_x^2(r)(1-g_x^2(r)).
\end{equation}
\end{lem}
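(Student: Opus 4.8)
The plan is to compute the derivative of $H(f_x(r),g_x(r))$ along the flow of the \emph{non-conservative} system \eqref{eqrad} and exploit the fact that \eqref{eqrad} differs from the Hamiltonian system \eqref{eqradcons} only by the dissipative term $\frac{2}{r}f$ in the first equation. Concretely, I would start from the chain rule
\begin{equation*}
\frac{d}{dr}H(f_x,g_x)=\partial_f H(f_x,g_x)\,f_x'+\partial_g H(f_x,g_x)\,g_x',
\end{equation*}
and substitute the derivatives $f_x',g_x'$ given by \eqref{eqrad}.

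The key observation is that the Hamiltonian structure of \eqref{eqradcons} means precisely that $\partial_g H=-f'_{\mathrm{cons}}=-g(f^2-ag^2+b)$ and $\partial_f H=g'_{\mathrm{cons}}=f(1-g^2)$, read off from \eqref{eqenergy}: differentiating $H=\frac12 f^2(1-g^2)+\frac{a}{4}g^4-\frac{b}{2}g^2$ gives $\partial_f H=f(1-g^2)$ and $\partial_g H=-f^2 g+ag^3-bg=-g(f^2-ag^2+b)$. Substituting the \emph{actual} trajectory's derivatives, $f_x'=g_x(f_x^2-ag_x^2+b)-\frac{2}{r}f_x$ and $g_x'=f_x(1-g_x^2)$, into the chain rule, the two "conservative" contributions cancel exactly (this is just the statement that $H$ is conserved along \eqref{eqradcons}), and the only surviving term comes from the extra piece $-\frac{2}{r}f_x$ multiplying $\partial_f H=f_x(1-g_x^2)$. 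This yields
\begin{equation*}
\frac{d}{dr}H(f_x,g_x)=\partial_f H\cdot\Bigl(-\tfrac{2}{r}f_x\Bigr)=-\frac{2}{r}f_x^2(1-g_x^2),
\end{equation*}
which is exactly \eqref{eqvarenergy}.

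This is a short and essentially mechanical computation; there is no real obstacle. The only point requiring a word of care is the behavior at $r=0$: the formula contains a $\frac{1}{r}$ factor, so strictly speaking the identity is asserted on $[0,R_x)$ and one should check that the right-hand side extends continuously to $r=0$. Since $f_x(0)=0$ and $f_x\in\mathcal{C}^1$ with $f_x(r)=O(r)$ near the origin (indeed from the integral representation in Lemma \ref{lemexistencesol} one has $f_x(r)\sim \frac{r}{3}g_x(0)(b-ag_x(0)^2)$ as $r\to 0^+$), the quantity $\frac{1}{r}f_x^2(r)$ tends to $0$, so the expression is well defined and continuous up to $r=0$. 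I would therefore present the derivation on $(0,R_x)$ and note this limiting value at the endpoint, completing the proof.
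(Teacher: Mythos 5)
Your proof is correct and follows essentially the same route as the paper: a direct chain-rule computation in which the conservative contributions $\partial_f H\cdot g(f^2-ag^2+b)$ and $\partial_g H\cdot f(1-g^2)$ cancel, leaving only the dissipative term $-\frac{2}{r}f_x^2(1-g_x^2)$. Your added remark that $f_x(r)=O(r)$ near the origin (so the right-hand side extends continuously to $r=0$) is a careful touch the paper leaves implicit, but it does not change the argument.
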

\begin{proof}
The equation (\ref{eqvarenergy}) is derived by a straightforward calculation
\begin{align*}
\frac{d}{dr}H(f_x(r),g_x(r))=&f_x(r)(1-g_x^2(r))f_x'(r)+(-f_x^2(r)+a g_x^2(r)-b)g_x(r)g_x'(r)\\
=&-\frac{2}{r}f_x^2(r)(1-g_x^2(r)).
\end{align*}
\end{proof}
\begin{lem}\label{lemA2Bneg}Let  $a ,b>0$ such that $a -2b<0$ and let $(f_x,g_x)$ be the solution of (\ref{eqrad}) satisfying $f_x(0)=0$, $g_x(0)=x$. If $x\neq0$ there is no solution that satisfies $\lim\limits_{r\to+\infty}(f_x(r),g_x(r))=(0,0)$.
\end{lem}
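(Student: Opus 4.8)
The plan is to combine the energy identity of Lemma~\ref{lemvarenergy} with the observation that a trajectory of \eqref{eqrad} cannot cross the level set $\{g^2=1\}$, so that it stays trapped on one side and $H$ becomes monotone along it. The hypothesis $a-2b<0$ will enter only at the very end, to fix the sign of the initial energy.

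First I would establish the confinement. Setting $v_x(r):=1-g_x(r)^2$ and differentiating along a solution of \eqref{eqrad}, one finds $v_x'(r)=-2f_x(r)g_x(r)\,v_x(r)$, a linear homogeneous ODE with continuous coefficient $-2f_xg_x$ on $[0,R_x)$. Integrating gives
\[
v_x(r)=(1-x^2)\exp\left(-2\int_0^r f_x(s)g_x(s)\,ds\right),
\]
so $1-g_x(r)^2$ keeps the sign of $1-x^2$ throughout the interval of existence. In particular, if $x^2\ge 1$ then $g_x(r)^2\ge 1$ for every $r$, hence $g_x$ cannot tend to $0$; this already rules out convergence to $(0,0)$ whenever $|x|\ge1$, for any $a,b>0$ (the assumption $a-2b<0$ is not needed in this range).

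It then remains to treat $0<x^2<1$, where confinement yields $g_x(r)^2<1$, i.e. $1-g_x^2>0$, for all $r$. Lemma~\ref{lemvarenergy} then shows that $r\mapsto H(f_x(r),g_x(r))$ is non-increasing, so
\[
H(f_x(r),g_x(r))\le H(f_x(0),g_x(0))=H(0,x)=\frac{x^2}{4}\left(ax^2-2b\right).
\]
Here is where the hypothesis is used: since $a<2b$ and $0<x^2<1$, we have $ax^2<a<2b$, whence $H(0,x)<0$. Therefore $H(f_x(r),g_x(r))\le H(0,x)<0$ for all $r$, which is incompatible with convergence to $(0,0)$, for such convergence would force $H(f_x(r),g_x(r))\to H(0,0)=0$ by continuity of $H$. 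This contradiction closes the case $0<x^2<1$ and completes the proof.

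The hard part, and the reason a naive energy argument fails, is that the sign of $\tfrac{d}{dr}H=-\tfrac{2}{r}f_x^2(1-g_x^2)$ is not fixed a priori: $H$ decreases where $g^2<1$ but increases where $g^2>1$, so $H$ need not be monotone along an arbitrary trajectory. The step that unlocks everything is the confinement identity for $v_x=1-g_x^2$, which forces a trajectory starting with $|g(0)|<1$ to remain in the region where $H$ is genuinely monotone. Once confinement is in hand, the remainder is the elementary evaluation of the initial energy, and the condition $a-2b<0$ is exactly what makes $H(0,x)$ strictly negative for every admissible $x\ne 0$ with $|x|<1$.
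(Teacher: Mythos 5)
Your proof is correct, and it takes a genuinely different route from the paper's in its key mechanism. The paper also starts from the energy identity of Lemma \ref{lemvarenergy}, but it splits cases at $x^2=\frac{2b}{a}$ rather than at $x^2=1$: for $x^2<\frac{2b}{a}$ it runs a first-contact argument with the boundary of the region $\Omega=\{H\ge 0,\ |g|<1\}$, which requires knowing that under $a-2b<0$ this region is contained in the strip $|g|\le h_{2,0}(f)<1$ (a fact read off from the quartic factorization of the zero level set $\Gamma_0$); for $x^2\ge\frac{2b}{a}$ (hence $x^2>1$) it exploits that $H$ is non-decreasing while $g>1$ together with the value $H(f,\pm 1)=\frac{a-2b}{4}<0$ to trap the trajectory in $\{g>1\}$. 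You replace both of these steps by the exact linear ODE $v'=-2f_xg_x\,v$ for $v=1-g_x^2$, whose integrating-factor solution shows that the sign of $1-g_x^2$ is invariant along any trajectory of \eqref{eqrad}, with no condition whatsoever on $a,b$. This disposes of $|x|\ge 1$ immediately (including the borderline $|x|=1$, which the paper handles elsewhere via Remark \ref{remsolone} under the extra hypothesis $a-b>0$), and reduces $0<|x|<1$ to the one-line observation that $H$ is then globally non-increasing with $H(0,x)=\frac{x^2}{4}(ax^2-2b)<0$, incompatible with $H\to H(0,0)=0$. What your approach buys is economy and generality: the invariance identity makes the analysis of $\Gamma_0$ and the set $\Omega$ unnecessary here, and it also subsumes the trapping statements that the paper proves separately by energy-monotonicity arguments under sign conditions on $a-b$ or $a-2b$ (Lemmas \ref{lemABpos} and \ref{lemA2Bpos}). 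What the paper's route buys is that the level-set geometry it develops (the functions $h_{1,C}$, $h_{2,C}$ and the phase portraits) is reused throughout the rest of the article, so the longer argument there is not wasted effort in context.
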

\begin{proof}
Let $(f_x(r),g_x(r))$ be a solution of (\ref{eqrad}) satisfying $f_x(0)=0$, $g_x(0)=x$ and defined on $[0,+\infty)$. If $\lim\limits_{r\to+\infty}(f_x(r),g_x(r))=(0,0)$, then $\forall\eps>0$, $\exists\delta_\eps>0$ such that $|g_x(r)|<\eps$ for $r>\delta_\eps$. In particular, $|g_x(r)|<1$ and the energy $H(f_x(r),g_x(r))$ is not increasing on $(\bar\delta,+\infty)$ for some $\bar\delta>0$ (see Lemma \ref{lemvarenergy}). This implies that, $\forall r>\bar\delta$,
$$
H(f_x(r),g_x(r))\ge\lim\limits_{r\to+\infty}H(f_x(r),g_x(r))=0.
$$
We define by $\Omega$ the set 
\begin{align}\label{eqOmega}
\Omega&=\left\{(f_x(r),g_x(r))\in \mathbb{R}^2\;;\;H(f_x(r),g_x(r))\ge0\ \mbox{and}\ |g_x(r)|<1\right\}\nonumber\\
&=\left\{(f_x(r),g_x(r))\in \mathbb{R}^2\;;\; |g_x(r)|\le h_{2,0}(f_x(r))\right\}\,,
\end{align}
with
$$
h_{2,0}(f_x(r))=\sqrt{\frac{f_x^2(r)+b-\sqrt{(f^2_x(r)+b)^2-2a f^2_x(r)}}{a }}.
$$
Then, $(f_x(r),g_x(r))\in\Omega$ for all $r>\bar\delta$. Moreover, we remark that, if $a -2b<0$, then $h_{2,0}(f_x(r))<1$ for all $f_x(r)\in \mathbb{R}$.
 
Suppose now that $x^2<\frac{2b}{a }$. Since $H(0,x)<0$ and thanks to the continuity of $H(f_x(r),g_x(r))$, there exists $\bar r \le \bar\delta$ such that $H(f_x(\bar r),g_x(\bar r))=0$; more precisely let $$\bar r =\inf\{r\in(0,\bar\delta]|(f_x(r),g_x(r))\in\partial\Omega\}.$$ Since $|g_x(\bar r)|= h_{2,0}(f_x(\bar r))<1$, the energy $H(f_x(r),g_x(r))$ is not increasing on a neighborhood of $\bar r$. As, for all $\eps>0$, $H(f_x(\bar r-\eps),g_x(\bar r-\eps))<0=H(f_x(\bar r),g_x(\bar r))$, we obtain a contradiction.

Now, suppose that $x\ge\sqrt{\frac{2b}{a }}$; in this case $H(0,x)\ge0$ and $g_x(0)>1$. Since $\lim\limits_{r\to+\infty}(f_x(r),g_x(r))=(0,0)$ and $g_x$ is a continuous function, there exists $\bar r \le \bar\delta$ such that $g_x(\bar r)=1$; more precisely let $\bar{r}=\inf\{r\in(0,+\infty)|g_x(r)=1\}$. This implies that $g_x(r)>1$ for all $r\in [0,\bar r)$, and, as a consequence, $H(f_x(r),g_x(r))$ is not decreasing on $[0,\bar r)$. Then 
$$
H(0,x)\le H(f_x(\bar r),1)=\frac{a -2b}{4}<0.
$$
Hence, we obtain a contradiction and $g_x(r)>1$ for all $r\in[0,+\infty)$.

Finally, with the same arguments, we prove that if $x\le-\sqrt{\frac{2b}{a }}$, then $g_x(r)<-1$ for all $r\in[0,+\infty)$.
\end{proof}

\begin{lem}\label{lemABpos}Let  $a ,b>0$ such that $a -b>0$ and let $(f_x,g_x)$ be the solution of (\ref{eqrad}) satisfying $f_x(0)=0$, $g_x(0)=x$. If $x^2>1$, then $g_x^2(r)>1$ for all $r\in [0,R_x)$.
\end{lem}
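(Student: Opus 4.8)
The plan is to argue by contradiction using the monotonicity of the energy $H$ established in Lemma~\ref{lemvarenergy}. Suppose $g_x^2$ fails to stay above $1$ on $[0,R_x)$. Since $g_x$ is continuous and $g_x(0)^2 = x^2 > 1$, there is a first time
\[
\bar r = \inf\{\, r \in (0,R_x) \; : \; g_x^2(r) = 1 \,\},
\]
at which $g_x(\bar r)^2 = 1$ while $g_x^2(r) > 1$ for all $r \in [0,\bar r)$. On this interval $1 - g_x^2(r) < 0$, so Lemma~\ref{lemvarenergy} gives $\frac{d}{dr}H(f_x(r),g_x(r)) = -\frac{2}{r}f_x^2(r)(1-g_x^2(r)) \ge 0$; hence $H$ is non-decreasing on $[0,\bar r]$ and in particular $H(f_x(0),g_x(0)) \le H(f_x(\bar r),g_x(\bar r))$.

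The next step is to evaluate both endpoints. Since $f_x(0)=0$, the left side is $H(0,x) = \frac{a}{4}x^4 - \frac{b}{2}x^2$, which depends on $x$ only through $x^2$. At the right endpoint $g_x(\bar r)^2 = 1$, so the term $\frac{1}{2}f_x^2(\bar r)(1-g_x^2(\bar r))$ vanishes and $H(f_x(\bar r),g_x(\bar r)) = \frac{a}{4} - \frac{b}{2} = \frac{a-2b}{4}$, independently of the value of $f_x(\bar r)$. Writing $H(0,x) = \phi(x^2)$ with $\phi(t) = \frac{a}{4}t^2 - \frac{b}{2}t$, we have $\phi'(t) = \frac{1}{2}(at - b)$, so the hypothesis $a - b > 0$ gives $\phi'(t) \ge \frac{1}{2}(a-b) > 0$ for all $t \ge 1$. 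Thus $\phi$ is strictly increasing on $[1,+\infty)$, and since $x^2 > 1$ we get $\phi(x^2) > \phi(1) = \frac{a-2b}{4}$. This contradicts $H(0,x) \le \frac{a-2b}{4}$, so no such $\bar r$ exists; by continuity $g_x^2(r) > 1$ for every $r \in [0,R_x)$.

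The argument is genuinely insensitive to the sign of $x$: both $H(0,x)$ and the terminal energy $\frac{a-2b}{4}$ (attained on either line $g = 1$ or $g = -1$) ignore that sign, so the same computation handles $x > 1$ and $x < -1$ simultaneously without invoking the symmetry $(f,g)\mapsto(-f,-g)$. I expect the only delicate point to be bookkeeping at the boundary of the interval, namely checking that the monotonicity of $H$ genuinely closes the inequality up to and including $\bar r$, and confirming that the strict inequality $\phi(x^2) > \phi(1)$ requires precisely the standing hypothesis $a - b > 0$ — this is exactly where that condition is consumed. The limiting value $x = 1$ is not covered, consistently with the explicit stationary solution $g \equiv 1$ of Remark~\ref{remsolone}.
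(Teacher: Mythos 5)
Your proof is correct and follows essentially the same route as the paper's: a contradiction at the first time $g_x^2$ reaches $1$, the energy monotonicity of Lemma~\ref{lemvarenergy} on the region $g_x^2>1$, and the strict monotonicity of the potential $\frac{a}{4}t^2-\frac{b}{2}t$ beyond $t=1$ (where $a-b>0$ is used, via $\sqrt{b/a}<1$). The only cosmetic difference is that you treat both signs of $x$ at once by working with $x^2$, whereas the paper runs the argument for $x>1$ and invokes the symmetric reasoning for $x<-1$.
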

\begin{proof} To prove this lemma, we use the monotonicity properties of the function $F(x)=\frac{a }{4}x^4-\frac{b}{2}x^2$. In particular, we use the fact that $F(x)$ is decreasing on $\left(-\infty, -\sqrt{\frac{b}{a }}\right)$ and increasing on $\left(\sqrt{\frac{b}{a }},+\infty\right)$. 

Let $g_x(0)=x>1$ and suppose, by contradiction, that there exists $r_0$ such that $g_x(r_0)=1$ and $g_x(r)>1$ for all $r\in [0,r_0)$.
As a consequence of Lemma \ref{lemvarenergy}, the energy $H(f_x(r),g_x(r))$ is not decreasing on $[0,r_0)$, which means
$$
H(0,x)\le H(f_x(r_0),g_x(r_0)) \; \mbox{, or equivalently, } \;
F(x) \le F(1).
$$
Since $\sqrt{\frac{b}{a }}<1$, the above inequality  contradicts the monotonicity of $F$. As a conclusion, $g_x(r)>1$ for all $r\in [0,R_x)$.
In the same way, using the fact that energy is not decreasing if $g_x(r)<-1$, we prove that if $g_x(0)=x<-1$, then $g_x(r)<-1$ for all $r\in [0,R_x)$.
\end{proof}

\begin{lem}\label{lemA2Bzero}Let  $a ,b>0$ such that $a =2b$ and let $(f_x,g_x)$ be the solution of (\ref{eqrad}) satisfying $f_x(0)=0$, $g_x(0)=x$. If $x\neq0$ there is no solution that satisfies $\lim\limits_{r\to+\infty}(f_x(r),g_x(r))=(0,0)$.
\end{lem}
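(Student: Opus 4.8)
The plan is to follow the same strategy as in the proof of Lemma \ref{lemA2Bneg}, replacing the threshold $\sqrt{2b/a}$ (which equals $1$ exactly when $a=2b$) and exploiting the special algebraic feature of the borderline case. By the symmetry $(f,g)\mapsto(-f,-g)$ of \eqref{eqrad}, which sends a solution with $g(0)=x$ converging to $(0,0)$ to one with $g(0)=-x$ converging to $(0,0)$, it suffices to treat $x>0$, and I would split according to whether $0<x<1$, $x=1$, or $x>1$. Since $a=2b$ gives $a-b=b>0$, the two outer cases are disposed of immediately: for $x>1$, Lemma \ref{lemABpos} yields $g_x^2(r)>1$ on all of $[0,R_x)$, so $g_x$ cannot tend to $0$; for $x=1$, Remark \ref{remsolone} gives the unique solution explicitly, namely $g\equiv 1$, which does not converge to $(0,0)$.

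The heart of the argument is the range $0<x<1$. Here the key observation, particular to $a=2b$, is that the whole lines $g=\pm1$ lie on the zero energy level: substituting $g^2=1$ into \eqref{eqenergy} gives $H(f,\pm1)=\frac{a}{4}-\frac{b}{2}=\frac{a-2b}{4}=0$, independently of $f$. On the other hand $H(0,x)=\frac{a}{4}x^4-\frac{b}{2}x^2=\frac{b}{2}x^2(x^2-1)<0$ for $0<x<1$. I would then argue that the trajectory can never leave the strip $\{|g|<1\}$: as long as $|g_x|<1$, Lemma \ref{lemvarenergy} shows $\frac{d}{dr}H(f_x(r),g_x(r))=-\frac{2}{r}f_x^2(r)(1-g_x^2(r))\le 0$, so $H(f_x(r),g_x(r))\le H(0,x)<0$ on that interval. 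If $|g_x|$ reached the value $1$ at some first time $r_1$, continuity of $H$ would force $H(f_x(r_1),g_x(r_1))=\frac{a-2b}{4}=0$, while the monotonicity just established gives $H(f_x(r_1),g_x(r_1))=\lim_{r\to r_1^-}H(f_x(r),g_x(r))\le H(0,x)<0$, a contradiction. Hence $|g_x(r)|<1$ for all $r\in[0,R_x)$.

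Consequently $H$ is non-increasing on the whole maximal interval and $H(f_x(r),g_x(r))\le H(0,x)<0$ everywhere. But a solution converging to $(0,0)$ would satisfy $H(f_x(r),g_x(r))\to H(0,0)=0$, contradicting the strict negative upper bound. This rules out convergence for $0<x<1$ and completes all cases. I expect the only delicate point to be the trapping step of the middle paragraph: one must use precisely the coincidence that $H$ vanishes identically on $\{g=\pm1\}$, which holds only when $a=2b$ and which replaces the strict inequality $h_{2,0}(f)<1$ that did the analogous job in the case $a-2b<0$; away from this observation the computations are routine.
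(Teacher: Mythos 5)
Your proof is correct and takes essentially the same approach as the paper: both dispose of $|x|\ge 1$ via Lemma \ref{lemABpos} and Remark \ref{remsolone}, and for $|x|<1$ both hinge on the energy monotonicity of Lemma \ref{lemvarenergy} combined with the borderline coincidence $H(f,\pm 1)=\frac{a-2b}{4}=0$ specific to $a=2b$. The only difference is organizational --- you establish trapping in the strip $\{|g|<1\}$ before invoking convergence, whereas the paper assumes convergence and contradicts at the first zero of $H$ --- and in doing so you usefully make explicit the step ($H<0$ forces $g^2\neq 1$) that the paper leaves implicit.
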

\begin{proof}
Let $(f_x(r),g_x(r))$ be a solution of (\ref{eqrad}) satisfying $f_x(0)=0$, $g_x(0)=x$ and defined on $[0,+\infty)$.
Suppose first that $x^2>1$. Then, thanks to Lemma \ref{lemABpos}, $g_x^2(r)>1$ for all $r\in [0,+\infty)$. 
Next, suppose that $x^2<1$. As before, if $\lim\limits_{r\to+\infty}(f_x(r),g_x(r))=(0,0)$, then $\forall\eps>0$, $\exists\delta_\eps>0$ such that $|g_x(r)|<\eps$ for $r>\delta_\eps$. In particular, $|g_x(r)|<1$ and the energy $H(f_x(r),g_x(r))$ is not increasing on $(\bar\delta,+\infty)$, for some $\bar\delta>0$. This implies that, $\forall r>\bar\delta$,
$$
H(f_x(r),g_x(r))\ge\lim\limits_{r\to+\infty}H(f_x(r),g_x(r))=0.
$$
Since $H(0,x)<0$ and thanks to the continuity of $H(f_x(r),g_x(r))$, there exists $\bar r \le \bar\delta$ such that $H(f_x(\bar r),g_x(\bar r))=0$; more precisely let $$\bar r =\inf\{r\in(0,\bar\delta]|H(f_x(r),g_x(r))=0\}.$$ By definition of $\bar r$, we have that $g^2(r)<1$ for all $r\in[0,\bar r) $; this implies that the energy is not increasing on $[0,\bar r )$. Then 
$$
0= H(f_x(\bar r),g_x(\bar r))\le H(0,x),
$$ 
 a contradiction.
Finally, thanks to Remark \ref{remsolone}, if $x^2=1$, then $g^2(r)=1$ for all $r\in [0,+\infty)$.
\end{proof}

%\noindent{\it Proof of Proposition \ref{conditionsparameters}.} It follows readily from the above series of lemmata.
%${}\hfill\square$

\begin{lem}\label{lemA2Bpos}Let  $a ,b>0$ such that $a -2b>0$ and let $(f_x,g_x)$ be the solution of (\ref{eqrad}) satisfying $f_x(0)=0$, $g_x(0)=x$. If $x^2<1$, then $g_x^2(r)<1$ for all $r\in [0,R_x)$.\\
\end{lem}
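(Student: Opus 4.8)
The plan is to argue by contradiction, exactly in the spirit of Lemmas \ref{lemABpos} and \ref{lemA2Bzero}, by tracking the energy $H$ along the flow. Assuming $x^2<1$ but that $g_x^2$ attains the value $1$ somewhere, I set $r_0=\inf\{r>0\,:\,g_x^2(r)=1\}$. Since $g_x$ is continuous with $g_x^2(0)=x^2<1$, we have $r_0>0$, and $g_x^2(r)<1$ on $[0,r_0)$ while $g_x(r_0)=\pm1$. Note $r_0<R_x$, because $g_x$ stays finite up to $r_0$.

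On $[0,r_0)$ one has $1-g_x^2>0$, so by Lemma \ref{lemvarenergy} the map $r\mapsto H(f_x(r),g_x(r))$ has non-positive derivative and is therefore non-increasing; by continuity of $(f_x,g_x)$ up to the closed endpoint $r_0$ this yields
$H(f_x(r_0),g_x(r_0))\le H(f_x(0),g_x(0))=H(0,x)$.
It remains to evaluate both sides. Since $g_x(r_0)^2=1$, the term $\tfrac12 f_x^2(1-g_x^2)$ vanishes and $H(f_x(r_0),g_x(r_0))=\tfrac{a}{4}-\tfrac{b}{2}=\tfrac{a-2b}{4}$, which is \emph{strictly positive} by hypothesis. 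On the other hand $H(0,x)=F(x)$ with $F(x)=\tfrac{a}{4}x^4-\tfrac{b}{2}x^2$, the very function used in Lemma \ref{lemABpos}, and $F(1)=\tfrac{a-2b}{4}$. Thus a contradiction follows once I show $F(x)<\tfrac{a-2b}{4}=F(1)$ for every $x$ with $x^2<1$.

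This last strict inequality is the only genuine computation, and it is precisely where the sign condition $a-2b>0$ enters. Writing $t=x^2\in[0,1)$ and $F(x)=\tilde F(t)$ with $\tilde F(t)=\tfrac{a}{4}t^2-\tfrac{b}{2}t$, I factor $\tilde F(t)-\tilde F(1)=(t-1)\,\tfrac{a(t+1)-2b}{4}$. For $t\in[0,1)$ the factor $t-1$ is negative, while $a(t+1)-2b\ge a-2b>0$, so $\tilde F(t)<\tilde F(1)$ strictly. This contradicts $H(f_x(r_0),g_x(r_0))\le H(0,x)$. Equivalently, one may invoke the monotonicity of $F$ as in Lemma \ref{lemABpos}, observing that $a-2b>0$ forces the minimizer $\sqrt{b/a}$ of $F$ to lie strictly inside $(0,1)$ so that $F(x)\le\max\{F(0),F(1)^-\}<F(1)$ on $|x|<1$; the factorization is the quickest route.

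Hence no such $r_0$ exists and $g_x^2(r)<1$ throughout $[0,R_x)$. The argument is identical whether $g_x(r_0)=1$ or $g_x(r_0)=-1$, since $H$ depends on $g$ only through $g^2$, so no separate treatment of the two boundary values is needed. I do not anticipate any serious obstacle; the only points requiring a word of care are that the monotonicity of $H$ persists up to the closed endpoint $r_0$ (handled by continuity of $(f_x,g_x)$) and the strict positivity of $F(1)-F(x)$, both of which are direct consequences of $a-2b>0$.
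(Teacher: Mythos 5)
Your proof is correct and follows essentially the same route as the paper: a contradiction at the first radius $r_0$ where $g_x^2$ reaches $1$, the monotonicity of $H$ from Lemma \ref{lemvarenergy} on $[0,r_0)$ giving $F(1)=H(f_x(r_0),g_x(r_0))\le H(0,x)=F(x)$, contradicted by the strict inequality $F(x)<F(1)$ for $x^2<1$ under $a-2b>0$. The only (harmless) difference is that you verify $F(x)<F(1)$ by the factorization $\tilde F(t)-\tilde F(1)=(t-1)\tfrac{a(t+1)-2b}{4}$, whereas the paper invokes the sign of $F$ on $\left[-\sqrt{2b/a},\sqrt{2b/a}\right]$ together with its monotonicity.
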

\begin{proof} To prove this lemma, we use the monotonicity of the function $F(x)=\frac{a }{4}x^4-\frac{b}{2}x^2$ and the fact that $F(x)\le0$ in $ \left[-\sqrt{\frac{2b}{a }}, \sqrt{\frac{2b}{a }}\right]$ . In particular, since $ \sqrt{\frac{2b}{a}}<1$, $F(x)<F(1)$ for all $x$ such that $x^2<1$.

Let $g_x(0)=x$ such that $x^2<1$ and suppose, by contradiction, that there exists $r_0$ such that $g_x^2(r_0)=1$ and $g_x^2(r)<1$ for all $r\in [0,r_0)$.
As a consequence of Lemma \ref{lemvarenergy}, the energy $H(f_x(r),g_x(r))$ is not increasing on $[0,r_0)$, that means
$$
H(0,x)\ge H(f_x(r_0),g_x(r_0)) \; \mbox{, or equivalently, } \;
F(x) \ge F(1).
$$
The above inequality contradicts the properties of $F$. As a conclusion, $g_x^2(r)<1$ for all $r\in [0,R_x)$.
\end{proof}

\begin{prop}\label{propconditions} Let  $a ,b>0$ such that $a -2b>0$ and let $(f_x,g_x)$ be the solution of (\ref{eqrad}) satisfying $f_x(0)=0$, $g_x(0)=x$ and such that $\lim\limits_{r\to+\infty}(f_x(r),g_x(r))=(0,0)$. Then $x^2<1$ and $g_x^2(r)<1$ for all $r\in [0,+\infty)$.
\end{prop}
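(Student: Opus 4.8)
The plan is to assemble this statement directly from the preliminary lemmas of this section, which already classify the behavior of solutions according to whether $x^2$ is greater than, equal to, or less than $1$. First I would record the elementary observation that the hypothesis $a-2b>0$ forces $a-b=(a-2b)+b>0$, so that every lemma established under the assumption $a-b>0$ is available here. I would also note at the outset that for the limit $\lim_{r\to+\infty}(f_x(r),g_x(r))=(0,0)$ even to make sense, the solution must be defined on all of $[0,+\infty)$; that is, $R_x=+\infty$.

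The heart of the matter is to exclude $x^2\ge 1$, establishing the first claim $x^2<1$. If $x^2>1$, then Lemma \ref{lemABpos} (applicable since $a-b>0$) gives $g_x^2(r)>1$ for every $r\in[0,+\infty)$, which is flatly incompatible with $g_x(r)\to 0$; hence this case does not occur. If instead $x^2=1$, I would invoke Remark \ref{remsolone}: the solution issuing from $g_x(0)=1$ is the explicit one with $g_x\equiv 1$ (respectively $g_x\equiv -1$ when $x=-1$), and such a solution manifestly fails to converge to the origin. Together these two subcases rule out $x^2\ge 1$, leaving $x^2<1$.

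Once $x^2<1$ is secured, the second claim is immediate: with $a-2b>0$ and $x^2<1$, Lemma \ref{lemA2Bpos} yields $g_x^2(r)<1$ for all $r\in[0,R_x)=[0,+\infty)$, as desired.

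In short, the proof is a bookkeeping combination of the earlier results, and I do not expect a genuine obstacle. The only point demanding a little care is the borderline value $x^2=1$, which lies exactly between the hypotheses of Lemmas \ref{lemABpos} and \ref{lemA2Bpos} and is therefore not covered by their dichotomy; it must be dispatched separately by appealing to the explicit constant-$g$ solutions of Remark \ref{remsolone}. I would also keep in mind the trivial solution corresponding to $x=0$ (Remark \ref{remsolzero}), for which both conclusions hold vacuously, so that it need not be excluded from the statement.
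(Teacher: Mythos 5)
Your proposal is correct and follows essentially the same route as the paper: the authors likewise rule out $x^2>1$ via Lemma \ref{lemABpos} and $x^2=1$ via Remark \ref{remsolone}, and then obtain $g_x^2(r)<1$ from Lemma \ref{lemA2Bpos}. Your explicit remarks that $a-2b>0$ implies $a-b>0$ and that the hypothesis forces $R_x=+\infty$ (so the uniqueness statement of Remark \ref{remsolone} applies) are left implicit in the paper but are exactly the right bookkeeping.
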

\begin{proof}
Let $(f_x,g_x)$ be the solution of (\ref{eqrad}) satisfying $f_x(0)=0$, $g_x(0)=x$ and such that $\lim\limits_{r\to+\infty}(f_x(r),g_x(r))=(0,0)$; then $\forall\eps>0$, $\exists\delta_\eps>0$ such that $|g_x(r)|<\eps$ for $r>\delta_\eps$. Hence, as a consequence of Lemma \ref{lemABpos} and Remark \ref{remsolone}, we have $x^2<1$.

Finally, thanks to Lemma \ref{lemA2Bpos}, we can conclude that  $g_x^2(r)<1$ for all $r\in [0,+\infty)$.
\end{proof}

\begin{proof}[Proof of Proposition \ref{conditionsparameters}] It follows readily from the above series of lemmata.
\end{proof}

\section{Existence of solutions}\label{existence}

In this section, we prove Theorem \ref{thmexistence} that is, we show that whenever $a>2b$, there exists at least one  nontrivial solution of the system
\begin{equation}\label{eqradbis}
\left\{\begin{aligned}
f'+\frac{2}{r}f&=g(f^2-a g^2+b)\,,\\
g'&=f(1-g^2)\,,
\end{aligned}\right.
\end{equation}
which goes  to $(0,0)$ as $r\to+\infty$ and such that $f(0)=0$. Moreover $f\le 0$ and $0\le g\le 1$ (or equivalently, $f\ge 0$ and $-1\le g\le 0$).

In what follows, let then $a,b>0$ be such that 
$\,a -2b>0\,$
and let $(f_x,g_x)$ be the solution of (\ref{eqradbis}) satisfying $f_x(0)=0$, $g_x(0)=x$.

We remind that (\ref{eqradbis}) is associated to the conservative system
\begin{equation}\label{eqradconsbis}
\left\{\begin{aligned}
f'&=g(f^2-a g^2+b)\,,\\
g'&=f(1-g^2)\,,
\end{aligned}\right.
\end{equation}
and we define  $\mathcal{A}=\{(f_0,g_0)\in\mathbb{R}^2|\, 2f_0^2-ag_0^2-(a-2b)\le 0,\; g_0^2\le 1\}$, the  set of admissible initial conditions. We recall that the energy of the system is given by
\begin{equation*}
H(f,g)=\frac{1}{2}f^2(1-g^2)+\frac{a }{4}g^4-\frac{b}{2}g^2.
\end{equation*}
\begin{rem*} If $g_0^2\le 1$, $(f_0,g_0)\in \mathcal{A}\,$ if and only if $\,H(f_0,g_0)\le H(0,1)=\frac{a-2b}{4}$. \end{rem*}

\begin{lem}\label{lemboundedorbit} Let $(f_0,g_0)\in \mathcal{A}$ and let $(f,g)$ be the solution of the conservative system (\ref{eqradconsbis}) with initial data $(f_0,g_0)$. Then $(f(r),g(r))\in \mathcal{A}$ for all $r\in[0,+\infty)$.  In particular, $(f(r),g(r))$ is bounded for all $r\in[0,+\infty)$.
\end{lem}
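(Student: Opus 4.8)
The plan is to show that the two inequalities defining $\mathcal{A}$ are \emph{separately} preserved by the flow of \eqref{eqradconsbis}, by checking that each defining function satisfies a homogeneous linear scalar ODE along trajectories and therefore cannot change sign. Set $P(r)=2f^2(r)-a\,g^2(r)-(a-2b)$ and $Q(r)=g^2(r)-1$, so that along any solution $(f,g)$ of \eqref{eqradconsbis} one has $(f(r),g(r))\in\mathcal{A}$ precisely when $P(r)\le 0$ and $Q(r)\le 0$. The point is that membership in $\mathcal{A}$ is encoded by the signs of $P$ and $Q$, and these signs are governed by scalar linear equations.

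First I would differentiate $P$ and $Q$ along the flow. Using $f'=g(f^2-ag^2+b)$ and $g'=f(1-g^2)$, a direct computation gives
\begin{align*}
Q' &= 2gg' = -2fg\,(g^2-1) = -2fg\,Q,\\
P' &= 4ff'-2agg' = 2fg\bigl(2f^2-ag^2-(a-2b)\bigr) = 2fg\,P.
\end{align*}
Both are first-order linear equations with the continuous coefficient $\pm 2f(r)g(r)$, so on the maximal interval $[0,R_0)$ of existence of $(f,g)$ one has
$$
Q(r)=Q(0)\exp\!\Bigl(-\int_0^r 2fg\,ds\Bigr),\qquad
P(r)=P(0)\exp\!\Bigl(\int_0^r 2fg\,ds\Bigr).
$$
Since the exponential factors are strictly positive, $P(r)$ and $Q(r)$ retain the signs of $P(0)$ and $Q(0)$. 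As $(f_0,g_0)\in\mathcal{A}$ means $P(0)\le 0$ and $Q(0)\le 0$, we conclude $P(r)\le 0$ and $Q(r)\le 0$, i.e. $(f(r),g(r))\in\mathcal{A}$, for every $r\in[0,R_0)$. This is consistent with the conservation of $H$ along \eqref{eqradconsbis}, since $4H-(a-2b)=-PQ$, so that $PQ$ is constant (in agreement with $P'Q+PQ'=0$) and with the remark characterizing $\mathcal{A}$ by $H\le H(0,1)$; but the two scalar equations above deliver the sign control directly and, notably, require no case distinction between $g_0^2<1$ and the degenerate boundary case $g_0^2=1$ where $H$ no longer controls $f$.

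It remains to upgrade forward-invariance to global existence, namely $R_0=+\infty$, which also yields the stated boundedness. Here I would use that $\mathcal{A}$ is compact: from $P\le 0$ together with $g^2\le 1$ one gets $2f^2\le ag^2+(a-2b)\le 2(a-b)$, hence $f^2\le a-b$ and $g^2\le 1$, so $\mathcal{A}$ is bounded (and closed). Consequently the orbit $(f(r),g(r))$ stays in the fixed compact set $\mathcal{A}$ throughout $[0,R_0)$; by the blow-up alternative for \eqref{eqradconsbis} (the analogue of Lemma \ref{lemexistencesol}, whose contraction-mapping proof applies to the regular system \eqref{eqradconsbis} as well), a finite $R_0$ would force $|f|+|g|\to+\infty$, which is impossible. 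Hence $R_0=+\infty$ and $(f(r),g(r))\in\mathcal{A}$ for all $r\in[0,+\infty)$. The only genuinely non-routine step is spotting the factorizations that turn $P'$ and $Q'$ into exact multiples of $P$ and $Q$; once the identity $P'=2fg\,P$ is verified, everything else is immediate, so I expect that single computation to be the step deserving care.
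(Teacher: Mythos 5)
Your proof is correct, and it takes a genuinely different route from the paper's. Your key identities check out: along \eqref{eqradconsbis} one indeed has $P'=2fg\,P$ and $Q'=-2fg\,Q$ for $P=2f^2-ag^2-(a-2b)$ and $Q=g^2-1$, so each defining inequality of $\mathcal{A}$ is separately preserved by the integrating-factor representation, and your algebraic identity $-PQ=4H-(a-2b)$ is also correct. The paper argues differently: it splits into cases. For $g_0^2<1$ it uses that the lines $g=\pm1$ are invariant (so by uniqueness the trajectory cannot cross them) together with conservation of $H$ and the remark that, when $g^2\le 1$, $(f,g)\in\mathcal{A}$ iff $H(f,g)\le\frac{a-2b}{4}$; for the boundary case $g_0=\pm1$ it must solve the scalar Riccati equation $f'=f^2-(a-b)$ explicitly, because there $Q\equiv 0$ and conservation of $H$ (equivalently of the product $PQ$) no longer controls $f$. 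In this light your factorization is precisely a refinement of the paper's mechanism: the paper conserves the product $PQ$ and pins down the sign of one factor, whereas you control the sign of each factor individually, which eliminates both the case distinction and the closed-form solution on $\{g^2=1\}$. You also make explicit the globality step ($\mathcal{A}$ compact plus the blow-up alternative for the locally Lipschitz autonomous system, giving $R_0=+\infty$), which the paper leaves implicit; this is legitimate since the contraction-mapping argument of Lemma \ref{lemexistencesol} applies a fortiori to the regular system \eqref{eqradconsbis}. Net effect: your argument is shorter, uniform across the degenerate boundary case, and self-contained on global existence, while the paper's is more geometric, reading the invariance directly off the phase portrait and the energy levels.
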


\begin{proof}
First of all, let $(f_0,g_0)\in \mathcal{A}$ such that $g_0^2<1$. Since in the case of autonomous systems two different trajectories cannot intersect, if $g_0^2<1$ then, $g^2(r)<1$ for all $r\in[0,+\infty)$. Moreover, $H(f,g)=H(f_0,g_0)\le\frac{a-2b}{4}$. This implies 
\begin{align*}
&2f^2(r)-ag^2(r)-(a-2b)\le 0\,,
\end{align*}
for all $r\in (0,+\infty)$.

Now suppose that $g_0=1$ ($g_0=-1$, respectively). In this case, $g(r)=1$ ($g(r)=-1$, respectively) for all $r\in (0,+\infty)$ and $f(r)$ solves 
\begin{equation}
\label{eqradconsfg1}
\left\{\begin{aligned}
f'(r)&=f^2(r)-a+b\,,\\ 
f(0)&=f_0\,.
\end{aligned}\right..
\end{equation}
Hence, if $f_0=\pm \sqrt{a-b}$, then $f(r)=\pm  \sqrt{a-b}$ for all $r\in[0,+\infty)$. Finally, if $f_0\neq\pm \sqrt{a-b}$, we solve the Cauchy problem (\ref{eqradconsfg1}) and we obtain
$$
f(r)=-2\sqrt{a-b}\left(e^{2\sqrt{a-b}\,r}\left(1-\frac{2\sqrt{a-b}}{f_0+\sqrt{a-b}}\right)-1\right)^{-1}-\sqrt{a-b}\,.
$$
Note that $f(r)$ is bounded for all $r\in(0,+\infty)$. Indeed, if $(f_0,1)\in\mathcal{A}$ and $f_0\neq\pm \sqrt{a-b}$, then $-\sqrt{a-b}<f_0<\sqrt{a-b}$ and 
$f'(r)\le0$ for all $r\in(0,+\infty)$. Hence, $f_0>f(r)>-\sqrt{a-b}$ for all $r\in(0,+\infty)$.
\end{proof}

\begin{lem}\label{lemconvconssol} 
Let $(f_0,g_0)\in \mathcal{A}$. Let $(f,g)$ be the solution of (\ref{eqradconsbis}) with initial data $(f_0,g_0)$. Let $(f^0_n,g^0_n)\in\mathcal{A}$ and $\rho_n$ be such that 
\begin{align*}
\lim\limits_{n\to+\infty}\rho_n=+\infty&\mbox{ and }\lim\limits_{n\to+\infty}(f^0_n,g^0_n)=(f_0,g_0).
\end{align*}
Let $(f_n,g_n)$ be a solution of 
\begin{equation*}
\left\{\begin{aligned}
f'_n+\frac{2}{\rho_n+r}f_n&=g_n(f_n^2-a g_n^2+b)\,,\\
g_n'&=f_n(1-g_n^2)\,,
\end{aligned}\right.
\end{equation*}
such that $f_n(0)=f_n^0$, $g_n(0)=g_n^0$, and let $[0,\tau_n)$ be the maximal existence interval of $(f_n,g_n)$. Then
\begin{enumerate}
\item $\lim\limits_{n\to+\infty}\tau_n=+\infty$,
\item $(f_n,g_n)$ converges to $(f,g)$ uniformly on bounded intervals.
\end{enumerate} 
\end{lem}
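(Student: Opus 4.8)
The plan is to regard the equations for $(f_n,g_n)$ as a perturbation of the autonomous system \eqref{eqradconsbis}: since $\rho_n\to+\infty$, the coefficient $\frac{2}{\rho_n+r}$ tends to $0$ uniformly on every bounded interval, so the non-autonomous flow should converge to the conservative one. The whole argument rests on an a priori bound. I would first show that $\mathcal{A}$ is forward invariant for the perturbed system, exactly as in Lemma \ref{lemboundedorbit}; this simultaneously yields global existence ($\tau_n=+\infty$) and provides the uniform bound needed to run a Grönwall comparison against $(f,g)$.

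First I would record the energy identity for the perturbed system. A computation identical to the one in Lemma \ref{lemvarenergy} gives
\[
\frac{d}{dr}H(f_n,g_n)=-\frac{2}{\rho_n+r}\,f_n^2\,(1-g_n^2),
\]
so $H(f_n,g_n)$ is non-increasing as long as $g_n^2\le 1$. Next I would show that $g_n^2\le 1$ is preserved: the equation $g_n'=f_n(1-g_n^2)$ has the same form as in the autonomous case, and the constants $g\equiv\pm1$ solve the scalar ODE $g'=f_n(r)(1-g^2)$; by uniqueness for this scalar equation, a trajectory with $(g_n^0)^2<1$ can never reach $\pm1$, while $(g_n^0)^2=1$ forces $g_n\equiv\pm1$. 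In either case $g_n^2\le1$ on $[0,\tau_n)$, hence $H$ is non-increasing and $H(f_n,g_n)\le H(f_n^0,g_n^0)\le\frac{a-2b}{4}$. By the Remark preceding Lemma \ref{lemboundedorbit}, this together with $g_n^2\le1$ forces $(f_n,g_n)\in\mathcal{A}$ — except on the lines $g_n\equiv\pm1$, where the energy is constant and does not control $f_n$. There I would argue as in Lemma \ref{lemboundedorbit} that $f_n'=-\frac{2}{\rho_n+r}f_n+f_n^2-a+b$ points strictly inward at $f_n=\pm\sqrt{a-b}$, so $f_n^2\le a-b$ is preserved. Altogether $(f_n(r),g_n(r))\in\mathcal{A}$ for all $r\in[0,\tau_n)$, and since $\mathcal{A}\subset\{f^2\le a-b,\;g^2\le1\}$ is compact, the orbits are bounded uniformly in $n$ and $r$.

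Boundedness then gives $\tau_n=+\infty$, proving (1): the right-hand side of the perturbed system is continuous in $r$ (no singularity, as $\rho_n>0$) and smooth in $(f,g)$, and it stays bounded along the orbit, so the blow-up alternative of Lemma \ref{lemexistencesol} (which applies verbatim here) rules out finite-time blow-up.

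Finally, for (2), fix $T>0$; both $(f,g)$ (global and valued in $\mathcal{A}$ by Lemma \ref{lemboundedorbit}) and $(f_n,g_n)$ are defined and lie in $\mathcal{A}$ on $[0,T]$. Writing the two systems in integral form and subtracting, the nonlinearity $\Phi(f,g)=\bigl(g(f^2-ag^2+b),\,f(1-g^2)\bigr)$ is Lipschitz on the compact set $\mathcal{A}$ with some constant $L$, while the extra term is controlled by $\int_0^r\frac{2}{\rho_n+s}|f_n|\,ds\le \frac{2\sqrt{a-b}}{\rho_n}T$. Hence, for $r\in[0,T]$,
\[
|f_n-f|+|g_n-g|\le |f_n^0-f_0|+|g_n^0-g_0|+\tfrac{2\sqrt{a-b}}{\rho_n}T+L\int_0^r\bigl(|f_n-f|+|g_n-g|\bigr)\,ds,
\]
and Grönwall's lemma yields $\sup_{[0,T]}(|f_n-f|+|g_n-g|)\le\bigl(|f_n^0-f_0|+|g_n^0-g_0|+\tfrac{2\sqrt{a-b}}{\rho_n}T\bigr)e^{LT}\to0$, since $(f_n^0,g_n^0)\to(f_0,g_0)$ and $\rho_n\to+\infty$. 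This is the claimed uniform convergence on bounded intervals. The main obstacle is the a priori bound of the second step — specifically preserving $f_n^2\le a-b$ on the invariant lines $g_n=\pm1$, where the energy is flat and gives no information; everything else is a routine continuous-dependence estimate.
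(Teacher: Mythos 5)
Your proof is correct, but it follows a genuinely different route from the paper's. The paper proceeds ``exactly as in Lemma 2.5 of Cazenave--Vazquez'': it never shows that $\mathcal{A}$ is invariant under the \emph{perturbed} flow; it only uses Lemma \ref{lemboundedorbit} to bound the limiting conservative orbit on $[0,R]$, then a contraction-mapping argument giving a uniform local existence time $\tau$ and the uniform bound $2M$ for solutions starting $\delta$-close to that orbit, then a Gr\"onwall estimate on each sub-interval of length $\tau$, re-initialized and iterated $N$ times with $N\tau\le R\le(N+1)\tau$. You instead prove an a priori result that the paper does not state: forward invariance of $\mathcal{A}$ for the non-autonomous system, via the perturbed energy identity $\frac{d}{dr}H(f_n,g_n)=-\frac{2}{\rho_n+r}f_n^2(1-g_n^2)$, scalar (forward and backward) uniqueness for $g'=f_n(r)(1-g^2)$ to preserve $g_n^2\le1$, and --- the point you rightly flag, since the Remark's ``if and only if'' fails at $g^2=1$, where $H\equiv\frac{a-2b}{4}$ carries no information on $f$ --- a separate inward-pointing argument for $f_n'=-\frac{2}{\rho_n+r}f_n+f_n^2-(a-b)$ on the invariant lines $g_n\equiv\pm1$. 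All these steps check out (the energy computation cancels exactly as in Lemma \ref{lemvarenergy}, and $\mathcal{A}\subset\{f^2\le a-b,\,g^2\le1\}$ is indeed compact, justifying your bound $\int_0^r\frac{2}{\rho_n+s}|f_n|\,ds\le\frac{2\sqrt{a-b}}{\rho_n}T$). What your approach buys: actual global existence $\tau_n=+\infty$ (for $n$ large enough that $\rho_n>0$ --- a hypothesis you should state, though it is harmless since $\rho_n\to+\infty$), which is stronger than the paper's conclusion $\lim_n\tau_n=+\infty$, plus a single Gr\"onwall application on $[0,T]$ with the explicit rate $\left(|f_n^0-f_0|+|g_n^0-g_0|+\frac{2\sqrt{a-b}}{\rho_n}T\right)e^{LT}$. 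What the paper's approach buys: robustness --- it needs no invariant region or energy structure for the perturbed system, only boundedness of the limit orbit and local well-posedness, so it transfers verbatim from the Cazenave--Vazquez setting. Both are valid; yours exploits more of the specific structure and yields a sharper statement.
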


\begin{proof} For the proof of this lemma, we proceed exactly as in the proof of Lemma $2.5$ of \cite{cazenavevazquez}; the only difference is that we do not work in all $\mathbb{R}^2$ but in $\mathcal{A}$. For the reader's convenience, we rewrite the proof in our case. 

We have to prove that for any $R<+\infty$, $\lim\limits_{n\to+\infty}\tau_n>R$ and $(f_n,g_n)$ converges to $(f,g)$ uniformly on $[0,R]$. Then consider $R<+\infty$. Thanks to Lemma \ref{lemboundedorbit}, if $(f_0,g_0)\in\mathcal{A}$, then $\bigcup\limits_{r\in[0,R]}(f(r),g(r))$ is a bounded set. Hence,  applying a contraction mapping argument, we obtain that there exists $\delta>0$ and $\tau>0$ satisfying the following property: for any $\rho\ge 1$, $r_0\in[0,R]$ and $(w_0,z_0)\in \mathcal{A}$ such that $|w_0-f(r_0)|+|z_0-g(r_0)|\le\delta$, there exists a solution $(w,z)\in\mathcal{C}^1([0,\tau],\mathbb{R}^2)$ of 
\begin{equation}\label{eqconvcons}
\left\{\begin{aligned}
w'+\frac{2}{\rho+r}w&=z(w^2-a z^2+b)\,,\\
z'&=w(1-z^2)\,,
\end{aligned}\right.
\end{equation}
with $w(0)=w_0$, $z(0)=z_0$\; and $\sup\limits_{r\in[0,\tau]}|w(r)|+|z(r)|\le 2M$, where $M=\sup\limits_{r\in[0,\tau]}|f(r)|+|g(r)|$.

Next, let $K$ the Lipschitz constant of the second term of the system (\ref{eqconvcons}) on the ball of radius $2M$. We get, for all $r\in[0,\tau]$, 
\begin{align*}
|w(r)-&\,f(r+r_0)|+|z(r)-g(r+r_0)|\le |w_0-f(r_0)|+|z_0-g(r_0)|\\
& +\int_0^r\frac{2}{\rho+s}|w(s)|\,ds+\int_0^rK(|w(s)-f(s+r_0)|+|z(s)-g(s+r_0)|)\,ds\\
\le&  |w_0-f(r_0)|+|z_0-g(r_0)|+\frac{4M\tau}{\rho}\\
&+\int_0^rK(|w(s)-f(s+r_0)|+|z(s)-g(s+r_0)|)\,ds.
\end{align*} 
Thus, thanks to Gronwall's lemma, we have
$$
|w(r)-f(r+r_0)|+|z(r)-g(r+r_0)|\le\left(|w_0-f(r_0)|+|z_0-g(r_0)|+\frac{4M\tau}{\rho}\right)e^{K\tau}\,,
$$
for all $r\in[0,\tau]$.
Finally, we apply this argument with $\rho=\rho_n$, $r_0=0$, $w_0=f_n^0$, $z_0=g_n^0$. We obtain $\lim\limits_{n\to+\infty}\tau_n\ge\tau$ and $(f_n,g_n)$ converges uniformly to $(f,g)$ on $(0,\tau)$.
To conclude, we iterate the above procedure  $N$ times with $N\tau\le R\le (N+1)\tau$.
\end{proof}

\begin{lem}\label{lempropertiessol} Let  $a ,b>0$ such that $a -2b>0$ and let $(f_x,g_x)$ be the solution of (\ref{eqradbis}) satisfying $f_x(0)=0$, $g_x(0)=x$. If $x^2<1$, then $g_x^2(r)<1$ and $f_x^2(r)<a-b$ for all $r \in [0,R_x)$ and $R_x=+\infty$. Moreover, $(f_x(r),g_x(r))\in\mathcal{A}$, for all $r\in[0,+\infty)$.
\end{lem}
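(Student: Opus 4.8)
The plan is to combine the energy monotonicity with the structural characterization of $\mathcal{A}$, so that essentially all the analytic work is carried by the lemmas already proved and only a short assembly is needed.

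First I would note that the bound $g_x^2(r)<1$ on the full maximal interval $[0,R_x)$ is exactly the content of Lemma \ref{lemA2Bpos}, since (\ref{eqradbis}) and (\ref{eqrad}) are the same system; this part therefore needs no new argument. Its crucial consequence is $1-g_x^2(r)>0$ throughout, which by Lemma \ref{lemvarenergy} gives $\frac{d}{dr}H(f_x(r),g_x(r))=-\frac{2}{r}f_x^2(r)(1-g_x^2(r))\le 0$; that is, the energy is non-increasing along the trajectory. Near $r=0$ the right-hand side is harmless because $f_x(r)=O(r)$ by the integral representation in Lemma \ref{lemexistencesol}, so $f_x^2(r)/r\to 0$ and $H$ is genuinely non-increasing on all of $[0,R_x)$.

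Next I would evaluate the initial energy. Since $H(0,x)=\frac{a}{4}x^4-\frac{b}{2}x^2=F(x)$ and, as in the proof of Lemma \ref{lemA2Bpos}, $F(x)<F(1)=\frac{a-2b}{4}=H(0,1)$ whenever $x^2<1$ (the hypothesis $a-2b>0$ ensures $\sqrt{b/a}<1$, so $F$ stays below $F(1)$ on $[-1,1)$ by its monotonicity), the energy monotonicity yields $H(f_x(r),g_x(r))\le H(0,x)<\frac{a-2b}{4}$ for all $r\in[0,R_x)$. Combined with $g_x^2(r)<1$, the Remark preceding Lemma \ref{lemboundedorbit} then gives $(f_x(r),g_x(r))\in\mathcal{A}$ for every $r\in[0,R_x)$. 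Reading off the defining inequality of $\mathcal{A}$, namely $2f_x^2(r)-ag_x^2(r)-(a-2b)\le 0$, together with $g_x^2(r)<1$, produces $2f_x^2(r)<a+(a-2b)=2(a-b)$, i.e. $f_x^2(r)<a-b$.

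Finally, $R_x=+\infty$ follows from boundedness: since $g_x^2(r)<1$ and $f_x^2(r)<a-b$ on $[0,R_x)$, the solution remains in a fixed compact set, so the blow-up alternative of Lemma \ref{lemexistencesol} (that $\lim_{r\to R_x}(|f_x|+|g_x|)=+\infty$ when $R_x<+\infty$) cannot occur; hence $R_x=+\infty$ and all the conclusions, including $(f_x(r),g_x(r))\in\mathcal{A}$, hold on $[0,+\infty)$. I do not expect a serious obstacle: the proof is a matter of threading together the earlier results, and the only point requiring mild care is the integrability of the energy derivative as $r\to 0^+$, which is controlled by $f_x(0)=0$ exactly as in the preceding lemmas.
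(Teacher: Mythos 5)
Your proposal is correct and follows essentially the same route as the paper's own proof: Lemma \ref{lemA2Bpos} for $g_x^2<1$, Lemma \ref{lemvarenergy} for monotonicity of $H$, the bound $H(f_x(r),g_x(r))\le H(0,x)<\frac{a-2b}{4}$, the remark characterizing $\mathcal{A}$ to conclude $(f_x,g_x)\in\mathcal{A}$ and $f_x^2<a-b$, and the blow-up alternative to get $R_x=+\infty$. The extra details you supply (the strict inequality $F(x)<F(1)$ via the monotonicity of $F$, the behavior of $\frac{2}{r}f_x^2$ near $r=0$, and the explicit algebra $2f_x^2<a+(a-2b)$) are all correct and merely make explicit what the paper leaves implicit.
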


\begin{proof} 
By Lemma \ref{lemA2Bpos} we have $g_x^2(r)<1$ on $[0,R_x)$. Then, applying Lemma \ref{lemvarenergy}, we obtain that the energy is non-increasing. Thus, 
$$
H(f_x(r),g_x(r))\le H(0,x)<\frac{a-2b}{4}\,,\;\forall r\in[0,R_x)\,,$$
and by the remark following the definition of the set $\mathcal{A}$, $(f_x(r),g_x(r))\in\mathcal{A}$ and $f_x^2(r)<a-b$, for all $r\in[0,R_x)$. In particular, $R_x=+\infty$.
\end{proof}

\begin{lem}\label{lemexpdecay} Let $a-2b>0$, $0<x<1$ and $f_x(r)<0$, $g_x(r)>0$ for $r\in (0,+\infty)$. Then there exists a positive constant $C$ such that 
\begin{align}\label{eqexpdecay}
0\le -f_x(r), g_x(r)&\le C\exp(-K_{a,b}\,r)\,,
%&r\in[0,+\infty)
\end{align}
with $K_{a,b}=\min\left\{\frac{b}{2},\frac{2a-b}{2a}\right\}$.
\end{lem}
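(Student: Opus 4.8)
The plan is to reduce the statement to an asymptotic analysis near the origin and then close it with elementary differential inequalities. By Lemma~\ref{lempropertiessol} the orbit is global, stays in $\mathcal{A}$, and satisfies $g_x^2<1$, $f_x^2<a-b$; the hypotheses add $f_x<0<g_x$ on $(0,+\infty)$. The first thing I would record is that these sign conditions force $(f_x,g_x)\to(0,0)$. Indeed $g_x'=f_x(1-g_x^2)<0$, so $g_x$ decreases monotonically to some $g_\infty\in[0,1)$; since $\int_0^{+\infty}|g_x'|\,dr=x-g_\infty<+\infty$ and $g_x'$ is uniformly continuous on $[1,+\infty)$, a Barbalat-type argument gives $g_x'\to0$, hence $f_x\to0$, and passing to the limit in the first equation forces $g_\infty(b-ag_\infty^2)=0$. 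It remains to exclude $g_\infty^2=b/a$. This is where the sign condition is essential: the conservative field \eqref{eqradcons} has a centre at $(0,\sqrt{b/a})$ (purely imaginary linear spectrum), and writing $g=\sqrt{b/a}+\eta$ the full system reduces near this point to the damped spherical-Bessel equation $\eta''+\frac{2}{r}\eta'+\omega^2\eta=0$ with $\omega^2=2b(1-b/a)>0$, whose solutions oscillate; this makes $g$ nonmonotone (equivalently $f_x$ change sign), contradicting $f_x<0<g_x$. Hence $g_\infty=0$ and $(f_x,g_x)\to(0,0)$.

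The second ingredient is a sign for the energy. By Lemma~\ref{lemvarenergy}, $H(f_x,g_x)$ is non-increasing, and since $(f_x,g_x)\to(0,0)$ we have $H(f_x,g_x)\to H(0,0)=0$; therefore $H(f_x(r),g_x(r))\ge0$ for every $r$. Written out this reads $f_x^2(1-g_x^2)\ge g_x^2\!\left(b-\tfrac{a}{2}g_x^2\right)$, i.e.
\[
-f_x(r)\ \ge\ g_x(r)\sqrt{\frac{b-\frac{a}{2}g_x^2(r)}{1-g_x^2(r)}}\,.
\]
Since $g_x(r)\to0$, the radicand tends to $b$, so for every $\eps>0$ there is $r_0$ with $-f_x\ge(\sqrt{b}-\eps)g_x$ on $[r_0,+\infty)$; this is the pointwise coupling that converts each scalar equation into a decay estimate.

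The third step is the integration. Inserting the lower bound on $-f_x$ into $g_x'=f_x(1-g_x^2)$ and using $g_x^2<1$ yields, after a short computation, a differential inequality of the form $g_x'\le-\tfrac{b}{2}\,g_x$ on $[r_0,+\infty)$ (the constant $\tfrac b2$ being the clean admissible rate that $H\ge0$ delivers), whence $g_x(r)\le C e^{-\frac{b}{2}r}$. For $f_x$ I would use the first equation in divergence form, $(r^2 f_x)'=r^2 g_x(f_x^2-ag_x^2+b)$, together with the decay of $g_x$ and the bound $f_x^2<a-b$; integrating from $r$ to $+\infty$ (recall $r^2f_x\to0$) and balancing the exponential weight against the algebraic factor produces the second admissible rate $\tfrac{2a-b}{2a}$, i.e. $-f_x(r)\le Ce^{-\frac{2a-b}{2a}r}$. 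Taking the worse of the two exponents and enlarging $C$ to absorb the compact interval $[0,r_0]$ gives \eqref{eqexpdecay} with $K_{a,b}=\min\{\tfrac b2,\tfrac{2a-b}{2a}\}$.

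The main obstacle is the convergence step. The energy is not conclusive there: $H(0,\sqrt{b/a})=-b^2/(4a)$ is the minimum of $F(x)=\tfrac a4x^4-\tfrac b2x^2$, so a monotonically decreasing energy is perfectly compatible with $g_\infty=\sqrt{b/a}$, and one genuinely has to use that $f_x$ keeps a constant sign to rule out the spiralling approach to the centre. Once convergence to $(0,0)$ is in hand — or imported from the existence argument of Section~\ref{existence} — the remaining estimates are routine, the only care being the uniform choice of $r_0$ so that the error terms in the two differential inequalities are absorbed into the stated rates.
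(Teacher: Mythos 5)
Your strategy is viable and genuinely different from the paper's proof at both of its key points. The paper never uses $H\ge 0$: it first assumes the monotone limit $\delta$ of $g_x$ is zero, and then obtains the decay directly from the differential inequalities $f_x'\ge -\frac{2}{r}f_x+\frac{b}{2}g_x$ and $g_x'\le \frac{2a-b}{2a}f_x$ (valid once $g_x^2\le \frac{b}{2a}$), applying Gronwall to the single quantity $g_x-f_x$ --- this is exactly where $K_{a,b}=\min\{\frac b2,\frac{2a-b}{2a}\}$ comes from. To exclude $\delta>0$, the paper uses the compactness result of Lemma \ref{lemconvconssol} to identify the limit as the centre $\left(0,\sqrt{b/a}\right)$, and then, instead of an oscillation argument, proves via $\left(w+f_x\right)'-\frac{a-b}{2a}(w+f_x)\le 0$ that $w\ge -f_x$ eventually, whence $-f_x'\ge b(-f_x)$ forces exponential growth of $-f_x$ --- a contradiction. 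Your Barbalat argument is a clean substitute for the compactness step, and your decay mechanism ($H\ge0$ giving $-f_x\ge(\sqrt b-\eps)g_x$, then scalar integration for $g_x$ and the divergence-form identity for $f_x$) is arguably more transparent than the coupled Gronwall.

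Three points need attention, one of them substantive. First, the exclusion of $g_\infty=\sqrt{b/a}$ is the heart of the lemma and your justification is only heuristic: the full system reduces to $\eta''+\frac2r\eta'+\omega^2\eta=0$ only up to $o(|\eta|+|f_x|)$ errors, and oscillation of the linearization does not transfer automatically to the nonlinear orbit. To close this you would, e.g., eliminate $f_x$ via $f_x=\eta'/(1-g_x^2)$, write the exact second-order equation $\eta''+p(r)\eta'+q(r)\eta=0$ with coefficients evaluated along the solution satisfying $p\to0$, $q\to\omega^2>0$, and invoke Sturm comparison to force a sign change of $\eta=g_x-\sqrt{b/a}$, contradicting $\eta>0$ (which holds since $g_x$ decreases to its limit). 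This is doable but is genuine work, comparable in length to the paper's growth argument. Second, two constants are misstated, though harmlessly: $H\ge0$ delivers the rate $\sqrt b-\eps$ for $g_x$, not $\frac b2$ --- for $b>4$ the inequality $g_x'\le-\frac b2 g_x$ does \emph{not} follow from $-f_x\ge(\sqrt b-\eps)g_x$ --- and the integration of $(r^2f_x)'$ does not ``produce'' the rate $\frac{2a-b}{2a}$; it merely transmits $g_x$'s rate to $-f_x$, since $r^{-2}\int_r^\infty s^2e^{-\kappa s}\,ds\le Ce^{-\kappa r}$ for $r\ge1$. The conclusion survives because $\sqrt b>K_{a,b}$ in every case ($K_{a,b}\le\frac b2\le\sqrt b$ if $b\le4$, and $K_{a,b}\le\frac{2a-b}{2a}<1<\sqrt b$ if $b>4$), but as written your intermediate claims are false for part of the parameter range. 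Third, ``$r^2f_x\to0$'' is asserted without proof: you should note that $(r^2f_x)'=r^2g_x(f_x^2-ag_x^2+b)>0$ for large $r$ and is integrable by the decay of $g_x$, so $r^2f_x$ converges to some $L\le0$; and $L<0$ would give $-f_x\sim|L|r^{-2}$, contradicting $\int_r^\infty(-f_x)\,ds\le C g_x(r)\le Ce^{-\kappa r}$, which follows from integrating $g_x'=f_x(1-g_x^2)$.
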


{\begin{rem*}The ideas behind the proof of this lemma are inspired by the work of Cazenave and Vazquez \cite{cazenavevazquez}. 
%But here we have to introduce new ideas because in our case there is an upper bound for the possible initial values of the function $g$, and this creates new difficulties that we have to deal with. 
\end{rem*}
}

\begin{proof} Applying Lemma \ref{lempropertiessol} we have $g_x^2(r)<1$. Thus $g_x$ is decreasing on $(0,+\infty)$ and there exists $\delta\ge 0$ such that $\lim\limits_{r\to+\infty}g_x(r)=\delta<1$. 

Suppose for the moment $\delta=0$. Then $\lim\limits_{r\to+\infty}g_x(r)=0$ and $g_x^2(r)\le\frac{b}{2a}$ for r large enough.
Considering (\ref{eqradbis}) we have
$$
f_x'\ge-\frac{2}{r}f_x+\frac{b}{2}g_x\;,\quad
g_x'\le\frac{2a-b}{2a}f_x\,.
$$
Hence, for $r$ large enough,
$$
(g_x-f_x)'+K_{a,b}(g_x-f_x)\le 0\,,
$$
with $K_{a,b}=\min\left\{\frac{b}{2},\frac{2a-b}{2a}\right\}$. Integrating this inequality, we obtain 
$$
-f_x(r)+g_x(r)\le C\exp(-K_{a,b}r).
$$
Therefore it only remains to prove that $\delta=0$. By contradiction, suppose $\delta>0$.

Let $\{r_n\}_n$ be a sequence such that $\lim\limits_{n\to+\infty}r_n=+\infty$ and $\lim\limits_{n\to+\infty}f_x(r_n)=k$. Let $(u,v)$ be the solution of (\ref{eqradconsbis}) with initial data $(k,\delta)$. It follows from Lemma \ref{lemconvconssol} that $(f_x(r_n+\cdot),g_x(r_n+\cdot))$ converges uniformly to $(u,v)$ on bounded intervals. Since $\lim\limits_{n\to+\infty}g_x(r_n+r)=\delta$ for any $r>0$, we have $v(r)=\delta<1$ for any $r\ge0$. Hence, from the second equation of (\ref{eqradconsbis}) and applying Lemma \ref{lemenergyprop}, we have $k=0$ and $\delta=\sqrt{\frac{b}{a}}$. As a conclusion, this means
$$
\lim\limits_{r\to+\infty}(f_x(r),g_x(r))=\left(0,\sqrt{\frac{b}{a}}\right).
$$
Now, we put $g_x=\sqrt{\frac{b}{a}}+w$. First of all, we remark that 
$$
\lim\limits_{r\to+\infty}(f_x(r),w(r))=\left(0,0\right)\,,
$$
and $w\ge 0$ on $[0,+\infty)$.
Furthermore, the equations for $(f_x,w)$ are given by 
$$
\left\{\begin{aligned}
f_x'+\frac{2}{r}f_x&=\left(\sqrt{\frac{b}{a}}+w\right)\left(f^2-a \left(\sqrt{\frac{b}{a}}+w\right)^2+b\right)\,,\\
w'&=f_x\left(1-\left(\sqrt{\frac{b}{a}}+w\right)^2\right)\,,
\end{aligned}\right.
$$
which implies that as $\,r\to +\infty$, 
\begin{equation}\label{eqproofexpdecay}
\left\{\begin{aligned}
f_x'+\frac{2}{r}f_x&=-2bw+o(|w|+|f_x|)\,,\\
w'&=f_x\left(1-\frac{b}{a}\right)+o(|w|+|f_x|)\,,
\end{aligned}\right. . 
\end{equation}
Then,
$$
(w+f_x)'=\frac{1}{2}\left(\frac{a-b}{a}\right)(w+f_x)+\left(\frac{a-b}{2a}-\frac{2}{r}\right)f_x-\left(\frac{a-b}{2a}+2b\right)w+o(|w|+|f_x|).
$$
As a consequence, for $r$ large enough, we obtain
$$
(w+f_x)'-\frac{1}{2}\left(\frac{a-b}{a}\right)(w+f_x)\le0\,.
$$
Therefore, $\exp\left(-\frac{a-b}{2a}r\right)(w+f_x)$ is non-increasing for $r$ large, and since $$\lim\limits_{r\to+\infty}\exp\left(-\frac{a-b}{2a}r\right)(w+f_x)=0,$$ we should have $w\ge-f_x>0$ for $r$ large. This implies that $o(|w|+|f_x|)$ can be replaced by $o(|w|)$.
To conclude, we put $\tilde f_x=-f_x$ and we remark that $\tilde f_x>0$ on $(0,+\infty)$ and $w\ge\tilde f_x$ for $r$ large. From the first equation of (\ref{eqproofexpdecay}) we obtain, for $r$ large,
$
\tilde f_x'\ge \left(\frac{3b}2-\frac{2}{r}\right) \tilde f_x\ge b\tilde f_x$, which implies 
$\tilde f_x(r) \ge C e^{br}
$ for some $C>0$, 
contradicting $\lim\limits_{r\to+\infty}\tilde f_x(r)=0$. Thus the lemma is proved.
\end{proof}

We are now ready to prove the main result of this paper. The proof of this theorem is given in several steps. Some parts of this proof are inspired by the work of Cazenave and Vazquez \cite{cazenavevazquez}. For our problem, we have to introduce new ideas to deal with the fact that the initial condition for $g$ have to be bounded.

We define the set $I$ as follows:
\begin{equation}\label{eqsetI}
I=\left\{x>\sqrt{\frac{b}{a}} \left|\right. \exists\, r_x\in (0,R_x), f_x<0 \mbox{ and } g_x>0 \mbox{ on } (0, r_x), f_x(r_x)=0 \right\}.
\end{equation}  
In the first step, we show that $I$ is a non empty open set. In the second step, we prove that $I$ is a bounded set and that $\sup I<1$. Finally, in the third step, we conclude taking $\tilde x=\sup I$ as initial condition.

Since we take $\tilde x=\sup I$ as initial condition for $g$, it follows from Proposition \ref{conditionsparameters} that $\sup I$ must be strictly smaller than $1$. Actually this is the main difficulty to deal with and represent the principal difference between our proof and Cazenave-Vazquez's one.\\

\noindent\textit{Step $1$.} $I$ is a non empty open set.

To prove that $I$ is a non empty set, we show that  $\left(\sqrt{\frac{b}{a}},\sqrt{\frac{2b}{a}}\right)\subset I$. Indeed, let $x\in \left(\sqrt{\frac{b}{a}},\sqrt{\frac{2b}{a}}\right)\subset \left(\sqrt{\frac{b}{a}},1\right)$. From Lemma \ref{lempropertiessol}, we have $g_x^2(r)<1$, $f^2_x(r)<a-b$ and $R_x=+\infty$. Hence the energy is non-increasing and $$H(f_x(r),g_x(r))\le H(0,x) <H\left(0,\sqrt{\frac{2b}{a}}\right)=0\,,$$ for $r>0$. 
On the other hand, $H(y,0)=\frac{1}{2}y^2 \ge 0$ for $y\in \mathbb R$. As a consequence, $g_x$ cannot vanish in a finite time. Therefore $g_x(r)>0$ for all $r\in (0,+\infty)$.

Next, using the first equation of (\ref{eqradbis}), we obtain $f'_x(0)=x(b-ax^2)/3<0$. Hence, $f_x(r)<0$ for $r>0$  small enough. Thus it remains to prove that $f_x(r)$ vanishes for some $r>0$. Suppose by contradiction that $f_x(r)<0$ for all $r\in(0,+\infty)$. Then, we can apply Lemma \ref{lemexpdecay} which asserts that $\lim\limits_{r\to+\infty}(f_x(r),g_x(r))=(0,0)$. This implies $\lim\limits_{r\to+\infty}H(f_x(r),g_x(r))=0$, which is a contradiction since $H(f_x(r),g_x(r))\le H(0,x) <0$. Therefore $f_x$ has to vanish and $x\in I$.

The open character of $I$ is a consequence of the continuous dependence of $(f_x, g_x)$ on $x$.\\

\noindent\textit{Step $2$.} $\sup I <1$.\\

First of all, we show that $\sup I\le 1$ and we state some properties of solutions of (\ref{eqradbis}) with initial condition $x\in I$.

\begin{lem}\label{lempropertiesI1} Let $x\in I$. Then  $0\le g_x(r_x)\le \sqrt{\frac{b}{a}}$ and $H(f_x(r_x),g_x(r_x))\le 0$.
\end{lem}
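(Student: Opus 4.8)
The plan is to read off both assertions from the behaviour of the solution at the single endpoint $r_x$, where $f_x(r_x)=0$ by the definition \eqref{eqsetI} of $I$. First I would record two elementary sign facts. Since $g_x>0$ on $(0,r_x)$, continuity gives $g_x(r_x)\ge 0$, which already furnishes the lower bound in the first assertion. And since $f_x<0$ on $(0,r_x)$ while $f_x(r_x)=0$, the function $f_x$ increases up to $0$ as $r\uparrow r_x$, so the one-sided difference quotient yields $f_x'(r_x)\ge 0$.

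The crux is then to exploit the first equation of \eqref{eqradbis} at the point $r_x$ only. Substituting $f_x(r_x)=0$ into $f_x'+\tfrac{2}{r}f_x=g_x(f_x^2-ag_x^2+b)$ makes both the singular term $\tfrac{2}{r_x}f_x(r_x)$ and the $f_x^2$ term drop out, leaving the identity
\[
f_x'(r_x)=g_x(r_x)\bigl(b-a\,g_x(r_x)^2\bigr).
\]
Now I would argue by contradiction: if $g_x(r_x)>\sqrt{b/a}$, then $g_x(r_x)>0$ and $b-a\,g_x(r_x)^2<0$, so the right-hand side is strictly negative, contradicting $f_x'(r_x)\ge 0$. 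Hence $0\le g_x(r_x)\le \sqrt{b/a}$, which is the first assertion.

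For the energy bound I would simply evaluate $H$ at the endpoint. Because $f_x(r_x)=0$, the term $\tfrac12 f^2(1-g^2)$ vanishes and
\[
H\bigl(f_x(r_x),g_x(r_x)\bigr)=\frac{a}{4}g_x(r_x)^4-\frac{b}{2}g_x(r_x)^2=\frac{g_x(r_x)^2}{4}\bigl(a\,g_x(r_x)^2-2b\bigr).
\]
The bound just obtained gives $a\,g_x(r_x)^2\le b<2b$, so the bracket is nonpositive and $H(f_x(r_x),g_x(r_x))\le 0$, as claimed.

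The whole argument is purely local at $r_x$ and, notably, requires neither the monotonicity of the energy from Lemma \ref{lemvarenergy} nor the a priori bound $g_x^2<1$; this is what makes it robust. The only point deserving a line of care is the justification that $f_x'(r_x)\ge 0$, i.e.\ that $f_x$ cannot arrive at $0$ from below with a strictly negative slope, which is immediate from the sign of the difference quotient as $r\uparrow r_x$. I do not expect a genuine obstacle here: the essential content is the observation that, at a first zero of $f_x$ approached from below, the sign of the nonlinearity $g_x(b-a g_x^2)$ is pinned by the sign of $f_x'$.
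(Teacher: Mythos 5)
Your proof is correct and follows essentially the same route as the paper: both arguments evaluate the first equation of \eqref{eqradbis} at $r_x$ (where $f_x(r_x)=0$) to get $f_x'(r_x)=g_x(r_x)\bigl(b-a\,g_x(r_x)^2\bigr)$, derive a contradiction with the sign of $f_x$ on $(0,r_x)$ if $g_x(r_x)>\sqrt{b/a}$, and then read off $H(f_x(r_x),g_x(r_x))=\frac{a}{4}g_x^4(r_x)-\frac{b}{2}g_x^2(r_x)\le 0$. The only cosmetic difference is that you first fix $f_x'(r_x)\ge 0$ via the one-sided difference quotient, whereas the paper assumes $g_x(r_x)>\sqrt{b/a}$, deduces $f_x'(r_x)<0$, and contradicts $f_x<0$ on $(0,r_x)$ directly.
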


\begin{proof}
Suppose by contradiction $g_x(r_x)> \sqrt{\frac{b}{a}}$. Using the first equation of (\ref{eqradbis}), we get $$f_x'(r_x)=g_x(r_x)(b-ag_x^2(r_x))<0.$$ Hence, $f_x$ is decreasing in a neighborhood of $r_x$; this implies that for $\eps>0$ sufficiently small $f_x(r_x-\eps)>f_x(r_x)=0$, which contradicts the definition of $r_x$. 
Finally, if $0\le g_x(r_x)\le \sqrt{\frac{b}{a}}$, then $H(f_x(r_x),g_x(r_x))=\frac{a}{4}g^4_x(r_x)-\frac{b}{2}g_x^2(r_x)\le 0$.
\end{proof}
As a consequence of Lemma \ref{lempropertiesI1}, $I\subset  \Big(\sqrt{\frac{b}{a}},1\Big)$. Indeed, if $x\ge 1$ then, thanks to Lemma \ref{lemABpos} and Remark \ref{remsolone},  $g_x(r)\ge 1$ for all $r\in [0,R_x)$. Hence $\sup I\le 1$. 

\begin{lem}\label{lempropertiesI2} Let $x\in I$. Then  $0<-f_x(r)\le \sqrt{\frac{a}{2}}\,g_x(r)$, for all  $r\in(0,r_x)$.
\end{lem}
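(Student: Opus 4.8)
The plan is to reformulate the desired pointwise bound as a sign condition on a single scalar quantity built from the energy, and then to control that quantity by a two-crossing (maximum principle) argument. Note first that the lower bound $0<-f_x$ is nothing but the definition of $I$ (we have $f_x<0$ on $(0,r_x)$), so the whole content is the upper estimate $-f_x\le\sqrt{a/2}\,g_x$, equivalently $f_x^2\le\frac a2 g_x^2$.

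Since $x\in I\subset\left(\sqrt{b/a},1\right)$, Lemma \ref{lempropertiessol} gives $g_x^2(r)<1$ on $(0,r_x)$, so $1-g_x^2>0$ there. Multiplying the target inequality by $1-g_x^2$ and inserting the definition of $H$, one checks the algebraic equivalence
$$
f_x^2(r)\le\tfrac a2 g_x^2(r)\iff G(r):=H(f_x(r),g_x(r))-\tfrac{a-2b}{4}g_x^2(r)\le 0 .
$$
Thus it suffices to prove $G\le 0$ on $(0,r_x)$. Next I would record the endpoint values and the derivative of $G$. A direct computation gives $G(0)=\frac a4 x^2(x^2-1)<0$ and $G(r_x)=\frac a4 g_x^2(r_x)(g_x^2(r_x)-1)$, where $g_x(r_x)\in(0,\sqrt{b/a}\,]$ by Lemma \ref{lempropertiesI1}; here the strict positivity $g_x(r_x)>0$ follows because $(f_x(r_x),g_x(r_x))=(0,0)$ is ruled out by uniqueness for the regular ODE at $r_x>0$ (the constant $(0,0)$ solves \eqref{eqradbis}, so this value would force $(f_x,g_x)\equiv(0,0)$, contradicting $f_x<0$). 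Hence $G(r_x)<0$ as well. Using Lemma \ref{lemvarenergy} and $g_x'=f_x(1-g_x^2)$, one obtains the factorised form
$$
G'(r)=-(1-g_x^2)\,f_x\left(\tfrac2r f_x+\tfrac{a-2b}{2}g_x\right)=(1-g_x^2)(-f_x)\,B(r),\qquad B(r):=\tfrac2r f_x+\tfrac{a-2b}{2}g_x .
$$

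Since $(1-g_x^2)(-f_x)>0$ on $(0,r_x)$, the sign of $G'$ equals the sign of $B$. The crucial observation is what happens at a zero of $G$: there $f_x=-\sqrt{a/2}\,g_x$, so that $B=g_x\left(\frac{a-2b}{2}-\frac{2\sqrt{a/2}}{r}\right)$, and therefore, with $r^*:=\frac{2\sqrt{2a}}{a-2b}$,
$$
B\ge 0\iff r\ge r^*,\qquad B\le 0\iff r\le r^*.
$$
I would then argue by contradiction: if $G(r_0)>0$ for some $r_0\in(0,r_x)$, then since $G(0)<0$ and $G(r_x)<0$ there is a last zero $r_1<r_0$ through which $G$ increases and a first zero $r_2>r_0$ through which $G$ decreases, whence $G'(r_1)\ge0$ and $G'(r_2)\le0$, i.e. $B(r_1)\ge0$ and $B(r_2)\le0$. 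By the sign computation this forces $r_1\ge r^*$ and $r_2\le r^*$, contradicting $r_1<r_2$. Hence $G\le0$ on $(0,r_x)$, which is the claim.

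The main obstacle, and the only genuinely delicate point, is the endpoint analysis: one must certify $g_x(r_x)>0$ so that $G(r_x)<0$ and both crossings truly exist. This is exactly where the singular term $\frac2r f_x$ prevents a naive invariant-cone argument (on the boundary $\{f_x=-\sqrt{a/2}\,g_x\}$ the field points inward only for $r\le r^*$), so the global two-crossing comparison above is what saves the day; the remainder is the energy identity and routine sign bookkeeping.
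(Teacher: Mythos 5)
Your proof is correct and is essentially the paper's argument in disguise: since
$G=\frac{1-g_x^2}{2}\bigl(f_x-\sqrt{a/2}\,g_x\bigr)\bigl(f_x+\sqrt{a/2}\,g_x\bigr)$, your sign condition $G\le 0$ coincides on $(0,r_x)$ (where $f_x<0<g_x$ and $g_x^2<1$) with nonnegativity of the paper's auxiliary function $h_x=f_x+\sqrt{a/2}\,g_x$, and your two-crossing contradiction at the threshold $r^*=\frac{2\sqrt{2a}}{a-2b}$ is exactly the paper's argument forcing $\rho_1\ge r^*\ge\rho_2$ with $\rho_1<\rho_2$. The only difference is cosmetic (you compute $G'$ via the dissipation identity of Lemma \ref{lemvarenergy} rather than differentiating $h_x$ directly), plus a welcome extra: your explicit backward-uniqueness justification that $g_x(r_x)>0$, a point the paper uses implicitly when asserting $h_x(r_x)>0$.
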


\begin{proof}
Let $h_x(r)=\sqrt{\frac{a}{2}}\,g_x(r)+f_x(r)$ for $0\le r\le r_x$. We have $h_x(0)=\sqrt{\frac{a}{2}}\,x>0$ and $h_x(r_x)=\sqrt{\frac{a}{2}}\,g_x(r_x)>0$. Suppose by contradiction that there exists  $\rho\in(0,r_x)$ such that $h_x(\rho)<0$. Then, there exist $\rho_1,\rho_2\in(0,r_x)$ such that 
\begin{align*}
\rho_1<\rho, \; h_x(\rho_1)=0 \mbox{ and } h'_x(\rho_1)\le 0\,, \\
\rho_2>\rho, \; h_x(\rho_2)=0 \mbox{ and } h'_x(\rho_2)\ge 0.
\end{align*} 
Next, we remark that 
\begin{align*}
\sqrt{\frac{a}{2}}\,h'_x(r)=&\left(\frac{a-2b}{2}+\frac{a}{2}g^2_x(r)-f^2_x(r)-\frac{2}{r}\sqrt{\frac{a}{2}}\right)f_x(r)\\
&+(f^2_x(r)-ag^2_x(r)+b)h_x(r). 
\end{align*}
Hence, 
\begin{align*}
0\ge \sqrt{\frac{a}{2}}\,h'_x(\rho_1)&=\left(\frac{a-2b}{2}+\frac{a}{2}g^2_x(\rho_1)-f^2_x(\rho_1)-\frac{2}{\rho_1}\sqrt{\frac{a}{2}}\right)f_x(\rho_1)\\
&=\left(\frac{a-2b}{2}-\frac{2}{\rho_1}\sqrt{\frac{a}{2}}\right)f_x(\rho_1)
\end{align*}
and, since $f_x(\rho_1)<0$, we obtain 
$$
\rho_1\ge \frac{2\sqrt{2a}}{(a-2b)}.
$$
In the same way, $ h'_x(\rho_2)\ge 0$ implies
$$
\rho_2\le \frac{2\sqrt{2a}}{(a-2b)}.
$$
As a consequence,
\begin{align*}
\rho_2\le \frac{2\sqrt{2a}}{(a-2b)}\le\rho_1\,,
\end{align*}
a contradiction.
\end{proof}

Let us next prove that the supremum of $I$ cannot be equal to $1$. For this, we use a contradiction argument. We start with some estimates for solutions of (\ref{eqradbis}) with initial data $x_n\in I$ supposing that $\lim\limits_{n\to +\infty}x_n=1$.

%We show that, in this case, for $n$ sufficiently large $g_{x_n}$ vanishes before $f_{x_n}$ does, which contradicts the fact that $x_n\in I$ for all $n>0$.

%First, we define $N_{x}(R_1,R_2)$ as the number of roots of $g_x$ in $\{r\in(R_1,R_2) \mbox{ s.t. } g_x^2(r)+f_x^2(r)\neq 0\}$ and use the notation $N_x(R)=N_x(0,R)$. We remark that the number of roots $N_x(R_1,R_2)$ is linked to the winding number of the trajectory around $(0,0)$ in the following way
%\begin{equation}\label{eqnumroot}
%N_x(R_1,R_2)=\frac{\theta_x(R_2)-\theta_x(R_1)}{\pi}
%\end{equation}
%where $\theta_x(r)=-\arctan\left(\frac{f_x(r)}{g_x(r)}\right)$.

%Then, we compute the derivative of $\theta_x(r)$ and we get
%$$
%\theta'_x(r)=\frac{f_x(r)g'_x(r)-f'_x(r)g_x(r)}{f^2_x(r)+g^2_x(r)}.
%$$
%We note that $\theta'_x(0)=\frac{-f'_x(0)}{x}>0$; hence $\theta_x(r)$ is increasing in a neighborhood of $0$. Let 

\begin{lem}\label{leminfgammax} Let $\{x_n\}_n\subset I$ a sequence of initial conditions such that $\lim\limits_{n\to +\infty}x_n=1$, and define $\gamma_{x_n}=\min\{r>0:f'_{x_n}(r)=0\}$. 
Then there exists a constant $c>0$ such that
\begin{equation}\label{eqinfgammax}
\inf_{n\in\mathbb N}\gamma_{x_n}=c.
\end{equation}
\end{lem}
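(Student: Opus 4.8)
The plan is to argue by contradiction. Suppose that $\inf_n\gamma_{x_n}=0$; then, passing to a subsequence (still indexed by $n$), I may assume $\gamma_n:=\gamma_{x_n}\to 0$ as $n\to+\infty$. First I would recall that each $\gamma_n$ is well defined and positive: since $x_n\in I$ we have $f_{x_n}<0$ on $(0,r_{x_n})$ with $f_{x_n}(0)=f_{x_n}(r_{x_n})=0$, so $f_{x_n}$ attains an interior minimum, while $f'_{x_n}(0)=x_n(b-ax_n^2)/3<0$ forces the first zero of $f'_{x_n}$ to lie strictly inside $(0,r_{x_n})$; in particular $\gamma_n<r_{x_n}$. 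The starting point is the identity obtained by evaluating the first equation of \eqref{eqradbis} at $r=\gamma_n$, where $f'_{x_n}(\gamma_n)=0$:
\be{relationgamma}
2\,\frac{f_{x_n}(\gamma_n)}{\gamma_n}=g_{x_n}(\gamma_n)\bigl(f_{x_n}^2(\gamma_n)-a g_{x_n}^2(\gamma_n)+b\bigr).
\ee
I intend to pass to the limit $n\to+\infty$ in both sides of \eqref{relationgamma} and reach a numerical contradiction. The right-hand side will be straightforward once the endpoint behaviour is known; the delicate term is the quotient $f_{x_n}(\gamma_n)/\gamma_n$, and the whole argument hinges on computing its limit.

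Next I would establish uniform control of $(f_{x_n},g_{x_n})$ on the shrinking intervals $[0,\gamma_n]$. By Lemma \ref{lempropertiesI2} together with $0<g_{x_n}<1$ on $(0,r_{x_n})$ (which holds since $x_n<1$, by Lemma \ref{lempropertiessol}), one has $|f_{x_n}|\le\sqrt{a/2}\,g_{x_n}\le\sqrt{a/2}$ on $[0,\gamma_n]$, whence $|f_{x_n}^2-ag_{x_n}^2+b|\le\tfrac{3a}{2}+b$. Feeding these bounds into the integral representation
\[
f_{x_n}(r)=\frac{1}{r^2}\int_0^r s^2 g_{x_n}(s)\bigl(f_{x_n}^2(s)-a g_{x_n}^2(s)+b\bigr)\,ds
\]
(used already in the proof of Lemma \ref{lemexistencesol}) gives the bootstrap estimate $|f_{x_n}(s)|\le \tfrac{s}{3}\bigl(\tfrac{3a}{2}+b\bigr)$, so $f_{x_n}\to 0$ uniformly on $[0,\gamma_n]$. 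Likewise, from $g_{x_n}(s)=x_n+\int_0^s f_{x_n}(1-g_{x_n}^2)$, the increment is bounded by $\sqrt{a/2}\,\gamma_n\to0$, and since $x_n\to1$ this yields $g_{x_n}\to1$ uniformly on $[0,\gamma_n]$.

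Consequently the integrand $g_{x_n}(s)\bigl(f_{x_n}^2(s)-ag_{x_n}^2(s)+b\bigr)$ converges to $b-a$ uniformly on $[0,\gamma_n]$, and therefore, using the integral representation once more,
\[
\frac{f_{x_n}(\gamma_n)}{\gamma_n}=\frac{1}{\gamma_n^3}\int_0^{\gamma_n}s^2\,g_{x_n}(s)\bigl(f_{x_n}^2(s)-ag_{x_n}^2(s)+b\bigr)\,ds\ \longrightarrow\ \frac{b-a}{3}.
\]
On the other hand the right-hand side of \eqref{relationgamma} tends to $1\cdot(0-a+b)=b-a$. Passing to the limit $n\to+\infty$ in \eqref{relationgamma} then gives $\tfrac{2}{3}(b-a)=b-a$, i.e. $b=a$, which is impossible since $a-2b>0$ forces $a>b$. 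This contradiction proves $\inf_n\gamma_{x_n}>0$, which is the claim with $c:=\inf_n\gamma_{x_n}$.

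I expect the main obstacle to be the rigorous evaluation of $\lim_n f_{x_n}(\gamma_n)/\gamma_n$. A naive mean value or difference quotient argument fails because of the singular term $\tfrac{2}{r}f$, whose presence is precisely what makes the slope at a shrinking interior point differ from $f'_{x_n}(0)$ by the crucial factor $2/3$. The integral representation of $f$, which carries the weight $s^2$ and hence the factor $1/3$, is what lets me isolate this discrepancy cleanly, while the uniform bounds furnished by Lemma \ref{lempropertiesI2} are what legitimise the passage to the limit on the intervals $[0,\gamma_n]$ whose length tends to $0$.
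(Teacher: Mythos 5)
Your proof is correct, and its core mechanism coincides with the paper's: assume $\gamma_n\to 0$ along a subsequence, show $f_n\to 0$ and $g_n\to 1$ uniformly on the shrinking intervals $[0,\gamma_n]$, deduce from the weighted integral representation that $f_n(\gamma_n)/\gamma_n\to (b-a)/3$, and play this off against the stationarity relation $\tfrac{2}{\gamma_n}f_n(\gamma_n)=g_n(\gamma_n)\bigl(f_n^2(\gamma_n)-a g_n^2(\gamma_n)+b\bigr)$; your arithmetic contradiction $\tfrac{2}{3}(b-a)=b-a$, impossible since $a>2b>b$, is just a rearrangement of the paper's conclusion that $f'_n(\gamma_n)\le \tfrac{b-a}{4}<0$, contradicting $f'_n(\gamma_n)=0$. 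Where you genuinely diverge is in how the uniform estimates on $[0,\gamma_n]$ are obtained. The paper first introduces the exit times $S_n=\min\{r>0: g_n(r)=x_n^2\}$, proves $0<\inf_n S_n\le\sup_n S_n<+\infty$ by a double-integral estimate with constants $m_{a,b},M_{a,b}$, then for large $n$ uses $\gamma_n<S_n$, the monotonicity of $f_n$ and $g_n$ on $[0,\gamma_n]$, and the explicit negative root of the quadratic relation at $\gamma_n$ to conclude $f_n(\gamma_n)\to 0$ and $g_n(\gamma_n)\to 1$. You bypass all of this: the a priori bound $0<-f_n\le\sqrt{a/2}\,g_n\le\sqrt{a/2}$ from Lemma \ref{lempropertiesI2} (legitimately invoked on $(0,\gamma_n]$, since Rolle's theorem applied to $f_n$ on $[0,r_{x_n}]$ gives $\gamma_n<r_{x_n}$) feeds directly into the integral representation to yield $|f_n(s)|\le \tfrac{s}{3}\bigl(\tfrac{3a}{2}+b\bigr)$ and $|g_n(s)-x_n|\le\sqrt{a/2}\,\gamma_n$, which is shorter, avoids the $S_n$ machinery entirely, and spares you the sign discussion needed to select the correct root of the quadratic. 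The trade-off is organizational rather than mathematical: the paper's $S_n$ bounds are recycled afterwards (they are quoted in the summary preceding Lemma \ref{lemsupsmaller1} and used there, e.g.\ to get $\limsup_n f_n(S_n)<0$ from the boundedness of $\{S_n\}$), so in the paper that extra work is not wasted, whereas your streamlined argument proves exactly this lemma and would leave the $S_n$ estimates still to be established for Step 2.
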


\begin{rem}
Since $f'_{x_n}(0)<0$, $\gamma_{x_n}$ is such that 
$$
f'_{x_n}(r)<0\ \mbox{on}\ [0,\gamma_{x_n})\ \mbox{and}\ f'_{x_n}(\gamma_{x_n})=0.
$$
As a consequence, $f_{x_n}(\gamma_{x_n})<0$.
\end{rem}

\begin{proof} For convenience we denote $f_{x_n}$, $g_{x_n}$ and $\gamma_{x_n}$ by $f_n$, $g_n$ and $\gamma_n$.

Since $x_n\to 1$ as $n\to +\infty$, let us assume that $x_n$ is such that $\sqrt{\frac{2b}{a}}\le x_n^2< 1$. Then there exists $S_{x_n}\in (0,+\infty)$ such that
$$
g_{n}(S_{x_n})=x_n^2  \quad\mbox{and}\quad x_n^2\le g_{n}(r)\le x_n\quad \forall r\in [0,S_{x_n}]\,.
$$
Let denote $S_{x_n}$ by $S_n$.
Hence,
$$
x_n-x_n^2=g_{n}(0)-g_{n}(S_{n})=-\int_0^{S_{n}}{f_{n}(r)(1-g_{n}^2(r))\,dr}
=\int_0^{S_{n}}\int_0^r k_n(s)s^2\,ds\,\frac{dr}{r^2}\,,
$$
with
\begin{equation*}
k_n(s)=g_{n}(s)(1-g_{n}^2(s))(f^2_{n}(s)+a g^2_{n}(s)-b).
\end{equation*}
Next, we remark 
\begin{equation}\label{eqineqk}
x_n(1-x_n)m_{a,b}\le k_n(s)\le x_n(1-x_n)M_{a,b}\,,
\end{equation}
where $m_{a,b}$ and $M_{a,b}$ are strictly positive constants that depend only on the parameters $a$ and $b$. As a consequence,
$$
x_n(1-x_n)\frac{m_{a,b}}{6}S^2_{n}\le \int_0^{S_{n}}\int_0^r k_n(s)s^2\,ds\,\frac{dr}{r^2}\le x_n(1-x_n)\frac{M_{a,b}}{6}S^2_{n}\,,
$$
which implies
$$
\left[\frac{M_{a,b}}{6}\right]^{-1/2}\le S_{n} \le \left[\frac{m_{a,b}}{6}\right]^{-1/2}\,.
$$
Hence, there exists a constant $\tilde c>0$ such that $\inf_{n\in\mathbb N}S_{n}=\tilde c$.

Now, suppose by contradiction that $\inf_{n\in\mathbb N}\gamma_{n}=0$, then there exists a subsequence of $\{\gamma_{n}\}_n$ such that  $\gamma_{n}\xrightarrow[n]{}0$ and, for $n$ sufficiently large, $\gamma_{n}<S_{n}$. 

First of all, we remark that $f_{n}(\gamma_{n})$ satisfies  
\begin{equation}\label{eqfngamman}
\frac{2}{\gamma_{n}}f_{n}(\gamma_{n})=g_{n}(\gamma_{n})(f_{n}^2(\gamma_{n})-a g_{n}^2(\gamma_{n})+b),
\end{equation}
and it has to be strictly negative. Furthermore, since $x_n^2<g_n(\gamma_n)\le x_n$, for $n$ large enough we can write 
$$
f_{n}(\gamma_{n})=\left(\frac{1}{\gamma_{n}}-\sqrt{\frac{1}{\gamma_{n}^2}+g_{n}^2(\gamma_{n})(a g_{n}^2(\gamma_{n})-b)}\right)\frac{1}{g_{n}(\gamma_{n})}.
$$
As a consequence,
$$
\lim\limits_{n\to +\infty}f_n(\gamma_n)=\lim\limits_{n\to +\infty}\left(\frac{1}{\gamma_{n}}-\sqrt{\frac{1}{\gamma_{n}^2}+g_{n}^2(\gamma_{n})(a g_{n}^2(\gamma_{n})-b)}\right)\frac{1}{g_{n}(\gamma_{n})}=0.
$$
By the definition of $\gamma_n$ the function $f_n$ is decreasing on $[0,\gamma_n)$. Then,
$$
0>f_n(r)>f_n(\gamma_n)\quad\forall r\in(0,\gamma_n)\,,
$$
and, passing to the limit, we obtain that for all $\eps>0$ there exists $n_0(\eps)\in \mathbb{N}$ such that
$$
|f_n(r)|\le \eps\,,
$$
for all $n\ge n_0(\eps)$ and for all $r\in [0,\gamma_n]$. Moreover, using the second equation of the system (\ref{eqradbis}), we have that $g_n$ is decreasing on $[0,\gamma_n)$. Thanks to the fact that  $x_n^2<g_n(\gamma_n)\le x_n$, we get for all $\eps>0$ there exists $m_0(\eps)\in \mathbb{N}$ such that
$$
1-\eps \le g_n(r)\le1+\eps\,,
$$
for all $n\ge m_0(\eps)$ and for all $r\in [0,\gamma_n]$. Hence, for all $\eps>0$ and for $n$ sufficiently large, we have the following estimates for all $r\in (0,\gamma_n]:$
$$
(1-\eps)(b-a(1+\eps)^2) \le f'_n(r)+\frac{2}{r}f_n(r)\le(1+\eps)(b-a(1-\eps)^2+\eps^2)\,,$$
which implies
$$
\frac{(1-\eps)(b-a(1+\eps)^2)r^3}{3} \le f_n(r)r^2 \le \frac{(1+\eps)(b-a(1-\eps)^2+\eps^2)r^3}{3}\,,
$$
and, as a consequence,
\begin{equation}\label{eqestimatefn}
\frac{(1-\eps)(b-a(1+\eps)^2)}{3} \le \frac{f_n(\gamma_n)}{\gamma_n} \le \frac{(1+\eps)(b-a(1-\eps)^2+\eps^2)}{3}\,,
\end{equation}
for all $\eps>0$ and for $n$ sufficiently large.
Using (\ref{eqestimatefn}) and the first equation of the system (\ref{eqradbis}), if $\eps$ is chosen sufficiently small we obtain
\begin{align}\label{eqestimatefprime}
f'_n(\gamma_n)&=-\frac{2}{\gamma_{n}}f_{n}(\gamma_{n})+g_{n}(\gamma_{n})(f_{n}^2(\gamma_{n})-a g_{n}^2(\gamma_{n})+b)\nonumber\\
&\le -\frac{2(1-\eps)(b-a(1+\eps)^2)}{3}+(1+\eps)(b-a(1-\eps)^2+\eps^2)\le\frac{b-a}4<0\,,\nonumber
\end{align}
which contradicts the definition of $\gamma_n$. So, there exists a constant $c>0$ such that
$$
\inf_{n\in\mathbb N}\gamma_{n}=c.
$$
\end{proof}

\begin{lem}\label{lemconvgammax} Let $\{x_n\}_n\subset I$ a sequence of initial conditions such that $\lim\limits_{n\to +\infty}x_n=1$, and define $\gamma_{x_n}=\min\{r>0:f'_{x_n}(r)=0\}$. 
Then, up to a subsequence,  
\begin{equation}\label{eqconvgammax}
\lim\limits_{n\to +\infty}\gamma_{x_n}=+\infty.
\end{equation}
\end{lem}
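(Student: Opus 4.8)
The plan is to argue by contradiction. Suppose the conclusion fails; then the sequence $\{\gamma_{x_n}\}_n$ is bounded, say $\gamma_{x_n}\le \Gamma$ for all $n$, and (after passing to a subsequence) $\gamma_{x_n}\to\gamma^\ast$ for some $\gamma^\ast\in[c,\Gamma]$, where $c>0$ is the constant provided by Lemma \ref{leminfgammax}. The idea is then to compare the solutions $(f_{x_n},g_{x_n})$ with the limiting solution obtained as $x\to 1$, and to show that this limiting solution has a strictly decreasing first component on all of $(0,+\infty)$, so that $f_{x_n}$ cannot have a critical point in the fixed bounded range $[c,\Gamma]$ for $n$ large.

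Concretely, I would first identify the limit. By Remark \ref{remsolone}, the solution of (\ref{eqradbis}) with $g(0)=1$ is the globally defined pair $g_1\equiv 1$ and $f_1(r)=\frac1r-\sqrt{a-b}\,\mathrm{Coth}(\sqrt{a-b}\,r)$. Since this solution exists on all of $[0,+\infty)$, the continuous dependence on the initial datum stated in Lemma \ref{lemexistencesol} applies up to $x=1$, and therefore $(f_{x_n},g_{x_n})\to(f_1,g_1)$ uniformly on $[0,R]$ for every fixed $R<+\infty$; I would take $R>\Gamma$. Differentiating the explicit expression gives $f_1'(r)=\frac{a-b}{\sinh^2(\sqrt{a-b}\,r)}-\frac1{r^2}$, which is strictly negative for every $r>0$ because $\sinh t>t$ for $t>0$. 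Hence $f_1$ is strictly decreasing on $(0,+\infty)$ and, on the compact interval $[c/2,R]$, one has $f_1'\le-\eta<0$ for some $\eta>0$.

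It then remains to transfer this monotonicity to $f_{x_n}$. Writing $f_{x_n}'(r)=g_{x_n}(r)\big(f_{x_n}^2(r)-a g_{x_n}^2(r)+b\big)-\frac{2}{r}f_{x_n}(r)$ and noting that on $[c/2,R]$ the factor $1/r$ is bounded, the uniform convergence of $(f_{x_n},g_{x_n})$ forces $f_{x_n}'\to f_1'$ uniformly on $[c/2,R]$. Consequently $f_{x_n}'<0$ throughout $[c/2,R]$ for $n$ large, while by construction $\gamma_{x_n}\in[c,\Gamma]\subset[c/2,R]$ and $f_{x_n}'(\gamma_{x_n})=0$; this is the desired contradiction, so $\{\gamma_{x_n}\}_n$ is unbounded and some subsequence satisfies $\gamma_{x_n}\to+\infty$. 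I expect the only delicate point to be the legitimacy of passing to the limit $x\to1$ in the continuous dependence statement, together with the handling of the $2/r$ term; the latter is harmless here precisely because the comparison is carried out on intervals bounded away from the origin, using that $\inf_n\gamma_{x_n}=c>0$ from Lemma \ref{leminfgammax}.
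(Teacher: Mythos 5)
Your proof is correct, and it reaches the contradiction by a genuinely different mechanism than the paper's, although the two arguments share the same two ingredients: continuity of the flow up to the explicit boundary solution $(f_1,g_1)$ of Remark \ref{remsolone} on compact intervals, and, ultimately, the elementary inequality $\mathrm{Sinh}\,t>t$. The paper stays at the level of function values: it exploits the algebraic relation \eqref{firstnew} satisfied at the critical point $\gamma_{x_n}$, solves the resulting quadratic for $f_{x_n}(\gamma_{x_n})$ (choosing the negative root), and deduces from the uniform convergence $(f_{x_n},g_{x_n})\to(f_1,g_1)$ that the function $h(r)=\sqrt{r^{-2}+(a-b)}-\sqrt{a-b}\,\mathrm{Coth}(\sqrt{a-b}\,r)$, which is strictly positive on $(0,+\infty)$, would have to vanish along a subsequence $\gamma_{x_n}\to\tilde\gamma\in[c,C]$; geometrically, $h>0$ says the limiting trajectory stays strictly off the locus $\{f'=0\}$ at $g=1$. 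You instead upgrade the $C^0$ convergence to $C^1$ convergence of $f_{x_n}$ on $[c/2,R]$ by reading the derivative off the equation itself — the $2/r$ term being harmless away from the origin, which is precisely where the lower bound $\inf_n\gamma_{x_n}=c>0$ from Lemma \ref{leminfgammax} enters — and you contradict $f'_{x_n}(\gamma_{x_n})=0$ directly with $\sup_{[c/2,R]}f_1'\le-\eta<0$. Note that your key inequality $f_1'(r)=(a-b)\,\mathrm{Sinh}^{-2}(\sqrt{a-b}\,r)-r^{-2}<0$ is \emph{equivalent} to the paper's $h(r)>0$ (both reduce to $\mathrm{Sinh}(\sqrt{a-b}\,r)>\sqrt{a-b}\,r$), so the proofs rest on the same analytic fact in two guises. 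What your route buys: it avoids solving the quadratic and the attendant sign discussion, and it is somewhat more robust, since it excludes not only exact critical points but near-critical ones in a bounded window — which is essentially the variant the paper needs later in Step 2, where it must rework this lemma for a sequence $\{t_n\}$ with $f'_{n}(t_n)\to0$ by inserting $o(1)$ corrections into \eqref{firstnew} and \eqref{secondnew}; your argument would cover that case verbatim. What the paper's version buys is that it never needs to discuss convergence of derivatives, working only with pointwise identities along the sequence. Your one flagged concern, the legitimacy of continuous dependence up to $x=1$, is indeed unproblematic: $R_1=+\infty$ by Remark \ref{remsolone}, the right-hand side is locally Lipschitz in the integral formulation of Lemma \ref{lemexistencesol}, and the paper itself invokes exactly this continuity of the flow at $x=1$ at the same point of its proof.
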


\begin{proof} 
Thanks to Lemma \ref{leminfgammax}, we know that the sequence $\{\gamma_{x_n}\}_n$ is bounded from below by a constant $c>0$. 

Now,  suppose by contradiction that $\sup_{n\in\mathbb N}\gamma_{x_n}=C<+ \infty$. Using the continuity of the flow and the fact that the sequence $\{\gamma_{x_n}\}_n$ is bounded from above by some constant $C$, we obtain
$$
\forall \eps>0, \exists n_ \eps>0,\forall n\in\mathbb{N}, n\ge n_ \eps \Rightarrow |g_1(\gamma_{x_n})-g_{x_n}(\gamma_{x_n})|\le \eps\,,
$$
which means
\begin{equation}\label{eqconvggamma}
\lim\limits_{n\to +\infty}g_{x_n}(\gamma_{x_n})=1,
\end{equation}
since $g_1(r)=1$ for all $r>0$. With the same arguments, we get 
\begin{equation}\label{eqconvfgamma}
\lim\limits_{n\to +\infty}\left(\frac{1}{\gamma_{x_n}}-\sqrt{a-b}\,\mathrm{Coth}\left(\sqrt{a-b}\,\gamma_{x_n}\right)-f_{x_n}(\gamma_{x_n})\right)=0
\end{equation}
where $\;\frac{1}{\gamma_{x_n}}-\sqrt{a-b}\,\mathrm{Coth}\left(\sqrt{a-b}\,\gamma_{x_n}\right)$ is the expression of $f_1(\gamma_{x_n})$.
Next, we remark that $f_{x_n}(\gamma_{x_n})$ satisfies  
\be{firstnew}
\frac{2}{\gamma_{x_n}}f_{x_n}(\gamma_{x_n})=g_{x_n}(\gamma_{x_n})(f_{x_n}^2(\gamma_{x_n})-a g_{x_n}^2(\gamma_{x_n})+b)\,,
\ee
and it has to be strictly negative. Hence, for $n$ large enough, we can write 
\be{secondnew}
f_{x_n}(\gamma_{x_n})=\left(\frac{1}{\gamma_{x_n}}-\sqrt{\frac{1}{\gamma_{x_n}^2}+g_{x_n}^2(\gamma_{x_n})(a g_{x_n}^2(\gamma_{x_n})-b)}\right)\frac{1}{g_{x_n}(\gamma_{x_n})}.
\ee
So, using (\ref{eqconvfgamma}), we have 
\begin{equation}\label{eqconvfgamma2}
\lim\limits_{n\to +\infty}\left(\sqrt{\frac{1}{\gamma_{x_n}^2}+(a-b)}-\sqrt{a-b}\,\mathrm{Coth}\left(\sqrt{a-b}\,\gamma_{x_n}\right)\right)=0.
\end{equation} 
Let $h(r)$ be defined by 
\begin{equation}\label{eqdefh}
h(r):=\sqrt{\frac{1}{r^2}+(a-b)}-\sqrt{a-b}\,\mathrm{Coth}\left(\sqrt{a-b}\,r\right)\,.
\end{equation}
$h$ is a smooth function such that $\lim\limits_{r\to 0}h(r)=0=\lim\limits_{r\to +\infty}h(r)$ and $h(r)>0$ for $r\in(0,+\infty)$. Indeed, 
$$
\sqrt{\frac{1}{r^2}+(a-b)}>\sqrt{a-b}\,\mathrm{Coth}(\sqrt{a-b}\,r)
$$
if and only if 
$$
\frac{1}{r^2}+(a-b)>(a-b)\,\mathrm{Coth}^2(\sqrt{a-b}\,r)\,,
$$
or equivalently, if and only if
$$
\mathrm{Sinh}(\sqrt{a-b}\,r)>\sqrt{a-b}\,r\,.
$$
Let us analyze the function $l(x)=\mathrm{Sinh}(x)-x$. The function $l(x)$ has  the following properties:
\begin{enumerate}
\item $\lim\limits_{x\to 0}l(x)=0$; 
\item $l'(x)=\mathrm{Cosh}(x)-1>0$ on $(0,+\infty)$.
\end{enumerate}
As a conclusion, $l>0$ on $(0,+\infty)$ and $h(r)>0$ for $r\in(0,+\infty)$.
Furthermore, thanks to the fact that the sequence $\{\gamma_{x_n}\}_n$ is bounded from above, up to a subsequence,
$$
\lim\limits_{n\to +\infty}\gamma_{x_n}=\tilde\gamma
$$
with $0<c\le\tilde\gamma\le C< +\infty$; hence we obtain
$$
\lim\limits_{n\to +\infty}h(\gamma_{x_n})=h(\tilde\gamma)>0\,,
$$
which contradicts the limit in (\ref{eqconvfgamma2}).
As a consequence, $\sup_{n\in\mathbb N}\gamma_{x_n}=+ \infty$ and there exists a subsequence of $\{\gamma_{x_n}\}_n$ such that
$$
\lim\limits_{n\to +\infty}\gamma_{x_n}=+\infty.
$$
\end{proof}
To summarize, let $\{x_n\}_n\subset I$ be a sequence of initial conditions such that $\lim\limits_{n\to +\infty}x_n=1$, $\gamma_{x_n}=\min\{r>0:f'_{x_n}(r)=0\}$ and $S_{x_n}=\min\{r>0:g_{x_n}(r)=x_n^2\}$ as in the proof of Lemma \ref{leminfgammax}. Then, up to a subsequence, $
\lim\limits_{n\to +\infty}\gamma_{x_n}=+\infty
$, $\inf\limits_{n\in\mathbb{N}}S_{x_n}>0$ and $\sup\limits_{n\in\mathbb{N}}S_{x_n}<+\infty$. Moreover $\gamma_{x_n}<r_{x_n}$.

Let us now end Step 2 by proving the following lemma.

\begin{lem}\label{lemsupsmaller1} Let $\tilde x=\sup I$. Then $\tilde x<1$.
\end{lem}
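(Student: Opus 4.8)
The plan is to argue by contradiction: assume $\tilde x=\sup I=1$ and derive a contradiction from the asymptotic data on almost-optimal sequences collected in Lemmas \ref{leminfgammax} and \ref{lemconvgammax}. Concretely, I would fix a sequence $\{x_n\}_n\subset I$ with $x_n\to 1$ and pass to the subsequence furnished by Lemma \ref{lemconvgammax}, so that $\gamma_{x_n}\to+\infty$, where $\gamma_{x_n}=\min\{r>0:f'_{x_n}(r)=0\}$ is the first critical point of $f_{x_n}$. As recorded in the summary preceding the statement, $\gamma_{x_n}<r_{x_n}$, which is important because it allows me to evaluate the pointwise estimate of Lemma \ref{lempropertiesI2}, valid on $(0,r_{x_n})$, at the point $r=\gamma_{x_n}$.

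The core of the argument is to identify $\lim_n f_{x_n}(\gamma_{x_n})$. For this I would invoke the continuous dependence in Lemma \ref{lemexistencesol}: the datum $x=1$ produces the global solution $(f_1,g_1)$ of Remark \ref{remsolone}, with $g_1\equiv 1$ and $f_1(r)=\frac{1}{r}-\sqrt{a-b}\,\mathrm{Coth}(\sqrt{a-b}\,r)$, so $(f_{x_n},g_{x_n})\to(f_1,g_1)$ uniformly on every fixed interval $[0,R]$. By the Remark following Lemma \ref{leminfgammax}, $f_{x_n}$ is decreasing on $[0,\gamma_{x_n}]$; hence, for every fixed $R$ and all $n$ large enough that $\gamma_{x_n}>R$, one has $f_{x_n}(\gamma_{x_n})\le f_{x_n}(R)\to f_1(R)$. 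Taking the infimum over $R$ gives $\limsup_n f_{x_n}(\gamma_{x_n})\le\inf_{R>0}f_1(R)=-\sqrt{a-b}$, the last equality because $f_1(R)>-\sqrt{a-b}$ for all $R>0$ while $f_1(R)\to-\sqrt{a-b}$ as $R\to+\infty$. On the other hand Lemma \ref{lempropertiessol} gives $f_{x_n}^2<a-b$, hence $f_{x_n}(\gamma_{x_n})>-\sqrt{a-b}$; combining the two bounds yields $f_{x_n}(\gamma_{x_n})\to-\sqrt{a-b}$.

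Finally I would feed this limit into Lemma \ref{lempropertiesI2}, which at $r=\gamma_{x_n}$ reads $-f_{x_n}(\gamma_{x_n})\le\sqrt{\tfrac{a}{2}}\,g_{x_n}(\gamma_{x_n})$, so that $g_{x_n}(\gamma_{x_n})\ge\frac{-f_{x_n}(\gamma_{x_n})}{\sqrt{a/2}}\to\sqrt{\frac{2(a-b)}{a}}$. The hypothesis $a-2b>0$ is precisely what makes $\frac{2(a-b)}{a}=2-\frac{2b}{a}>1$, so that $g_{x_n}(\gamma_{x_n})>1$ for $n$ large. This contradicts $g_{x_n}^2(r)<1$ from Lemma \ref{lempropertiessol}, and the contradiction establishes $\tilde x<1$.

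The step I expect to be the main obstacle is the identification of $\lim_n f_{x_n}(\gamma_{x_n})$: since $\gamma_{x_n}\to+\infty$, continuous dependence cannot be applied directly at $r=\gamma_{x_n}$, and one must instead combine the monotonicity of $f_{x_n}$ on $[0,\gamma_{x_n}]$ with the value of the limiting profile $f_1$ ``at infinity'' to capture the level $-\sqrt{a-b}$. Once this is in place, the hypothesis $a>2b$ closes the argument cleanly through Lemma \ref{lempropertiesI2}.
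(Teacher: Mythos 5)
Your proof is correct, and it takes a genuinely different --- and noticeably shorter --- route than the paper's. After extracting the subsequence with $\gamma_{x_n}\to+\infty$ from Lemma \ref{lemconvgammax}, the paper does not work at $\gamma_{x_n}$ directly: it first manufactures intermediate times $t_n\in[S_{x_n},\gamma_{x_n}]$ with $|f'_{x_n}(t_n)|+|g'_{x_n}(t_n)|\to 0$ via a root-counting/winding-number argument (comparing $N_n(S_n,\gamma_n)=0$ with a lower bound on $\theta'_n$ that would otherwise force infinitely many sign changes), shows $t_n\to+\infty$, identifies $(f_{x_n}(t_n),g_{x_n}(t_n))\to(-\sqrt{a-b},1)$ with the help of an integral estimate giving $\limsup_n f_{x_n}(S_{x_n})<0$, and finally invokes the conservative-limit Lemma \ref{lemconvconssol} at the rest point $(-\sqrt{a-b},1)$, where $-w>\sqrt{a/2}\,z$ precisely because $a>2b$, contradicting Lemma \ref{lempropertiesI2}. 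You shortcut all of this: the monotonicity of $f_{x_n}$ on $[0,\gamma_{x_n}]$ combined with continuous dependence (Lemma \ref{lemexistencesol}) against the explicit solution $f_1$ of Remark \ref{remsolone} gives $\limsup_n f_{x_n}(\gamma_{x_n})\le\inf_{R>0}f_1(R)=-\sqrt{a-b}$ (your claims $f_1>-\sqrt{a-b}$ and $f_1(R)\to-\sqrt{a-b}$ are easily checked, e.g. from $e^{2x}-1>2x$), while the a priori bound $f_{x_n}^2<a-b$ of Lemma \ref{lempropertiessol} supplies the matching lower bound, so $f_{x_n}(\gamma_{x_n})\to-\sqrt{a-b}$; then Lemma \ref{lempropertiesI2} evaluated at $r=\gamma_{x_n}$ (legitimate, since $\gamma_{x_n}<r_{x_n}$) yields $g_{x_n}(\gamma_{x_n})\ge\sqrt{2/a}\,\bigl(-f_{x_n}(\gamma_{x_n})\bigr)\to\sqrt{2(a-b)/a}>1$, contradicting $g_{x_n}^2<1$. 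Both proofs hinge on the same final mechanism --- the incompatibility, when $a>2b$, of the level $-\sqrt{a-b}$ with the cone bound $-f\le\sqrt{a/2}\,g$, $g<1$ --- and both rest on Lemma \ref{lemconvgammax}; what your version buys is the elimination of the $t_n$ construction, the winding-number computation, and Lemma \ref{lemconvconssol} from this step, at the modest price of leaning on the explicit formula for $f_1$, which the paper already exploits in proving Lemma \ref{lemconvgammax}, so you stay entirely within the paper's toolkit.
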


\begin{proof} For convenience we denote $f_{x_n}$, $g_{x_n}$, $\gamma_{x_n}$ and $S_{x_n}$ by $f_n$, $g_n$, $\gamma_n$ and $S_n$.
We already proved that $\sup I\le 1$. Then suppose, by contradiction, that there exists a sequence $\{x_n\}_n\subset I$ such that $\lim\limits_{n\to +\infty}x_n=1$. 

We claim then that in  this case there exists a sequence $\{t_n\}_n$ such that $S_n\le t_n\le \gamma_n$ for all $n\in \mathbb{N}$ and $\lim\limits_{n\to +\infty}|f'_{n}(t_n)|+|g'_{n}(t_n)|=0$.

Indeed, suppose by contradiction that, up to a subsequence, $$\lim\limits_{n\to +\infty}\inf\limits_{r\in[S_n,\gamma_n]}|f'_{n}(r)|+|g'_{n}(r)|>0\,.$$  

We define $N_{x}(R_1,R_2)$ as the number of roots of $g_x$ in $\{r\in(R_1,R_2) \mbox{ ; } g_x^2(r)+f_x^2(r)\neq 0\}$ and use the notation $N_x(R)=N_x(0,R)$. We remark that the number of roots $N_x(R_1,R_2)$ is linked to the winding number of the trajectory around $(0,0)$ in the following way:
\begin{equation}\label{eqnumroot}
N_x(R_1,R_2)=\frac{\theta_x(R_2)-\theta_x(R_1)}{\pi}\,,
\end{equation}
where $\theta_x(r)=-\arctan\left(\frac{f_x(r)}{g_x(r)}\right)$.
Then, we compute the derivative of $\theta_x(r)$ and we get
$$
\theta'_x(r)=\frac{f_x(r)g'_x(r)-f'_x(r)g_x(r)}{f^2_x(r)+g^2_x(r)}.
$$
We note that $\theta'_x(0)=\frac{-f'_x(0)}{x}>0$. Hence $\theta_x(r)$ is increasing in a neighborhood of $0$. Let denote $N_{x_n}$ and $\theta_{x_n}$ by $N_n$ and $\theta_n$. 

First of all, we remind that for all $n\in \mathbb{N}$ the following properties hold for all $r\in[S_n,\gamma_n]$:
\begin{itemize}
\item $f'_n(r)\le0$;
\item $-\sqrt{a-b}< f_n(\gamma_n)<f_n(r)<f_n(S_n)\le -\delta<0$ for some $\delta>0$ that does not depend on $n$;
\item $g'_n(r)<0$;
\item $\sqrt{\frac{b}{a}}<g_n(\gamma_n)<g_n(r)<g_n(S_n)=x_n^2<1$.
\end{itemize}
Note that the first inequality of the last property is obtained using the first equation of (\ref{eqradbis}) and the definition of $\gamma_n$. Moreover, the inequality $f_n(S_n)\le -\delta<0$ will be proved below.

Using these properties and the definition of $N_n(S_n,\gamma_n)$, we obtain 
\begin{equation}\label{eqnumrootSngamman}
N_n(S_n,\gamma_n)=0
\end{equation}
for all $n\in \mathbb{N}$.

On the other hand, as we prove below, we have
\begin{equation}\label{eqnumrootconv}
\lim\limits_{n\to+\infty}N_n(S_n,\gamma_n)=+\infty.
\end{equation}
Indeed,
\begin{align*}
 \liminf\limits_{n\to+\infty}N_n(S_n,\gamma_n)&=\liminf\limits_{n\to+\infty}\frac{1}{\pi}\int_{S_n}^{\gamma_n}\theta'_n(r)\,dr\\&\ge \frac{1}{\pi}\liminf\limits_{n\to+\infty} \inf\limits_{r\in[S_n,\gamma_n]}\theta'_n(r)\left(  \lim\limits_{n\to+\infty}\gamma_n+\limsup\limits_{n\to+\infty}S_n\right).
\end{align*}
Hence, if $\liminf\limits_{n\to+\infty} \inf\limits_{r\in[S_n,\gamma_n]}\theta'_n(r)>0$, we obtain $$\liminf\limits_{n\to+\infty}N_n(\gamma_n)=+\infty$$ since $ \lim\limits_{n\to+\infty}\gamma_n=+\infty$. Therefore, it remains to prove 
\begin{equation}\label{eqliminftheta}
\liminf\limits_{n\to+\infty} \inf\limits_{r\in[S_n,\gamma_n]}\theta'_n(r)>0.
\end{equation}
We remark that, for all $r\in[S_n,\gamma_n]$,
\begin{align*}
\theta'_n(r)&=\frac{f_n(r)g'_n(r)-f'_n(r)g_n(r)}{f^2_n(r)+g^2_n(r)}=\frac{|f_n(r)||g'_n(r)|+|f'_n(r)||g_n(r)|}{f^2_n(r)+g^2_n(r)}\\
&\qquad\ge\frac{\min\left\{\delta,\sqrt{\frac{b}{a}}\right\}}{a-b+1}(|g'_n(r)|+|f'_n(r)|),
\end{align*}
which implies
$$
 \inf\limits_{r\in[S_n,\gamma_n]}\theta'_n(r)\ge C\inf\limits_{r\in[S_n,\gamma_n]}|g'_n(r)|+|f'_n(r)|\,,
$$
with $C=\frac{\min\left\{\delta,\sqrt{\frac{b}{a}}\right\}}{a-b+1} >0$. Hence, 
$$
 \liminf\limits_{n\to+\infty}\inf\limits_{r\in[S_n,\gamma_n]}\theta'_n(r)\ge C  \lim\limits_{n\to+\infty}\inf\limits_{r\in[S_n,\gamma_n]}|g'_n(r)|+|f'_n(r)|>0.
$$
As a conclusion, if up to a subsequence  $\lim\limits_{n\to +\infty}\inf\limits_{r\in[S_n,\gamma_n]}|f'_{n}(r)|+|g'_{n}(r)|>0$, (\ref{eqnumrootSngamman}) and (\ref{eqnumrootconv}) provide a contradiction.\\

Hence, there exists a sequence $\{t_n\}_n$ such that for all $n\in \mathbb{N}$,  $S_n\le t_n\le \gamma_n$  and $$\lim\limits_{n\to +\infty}|f'_{n}(t_n)|+|g'_{n}(t_n)|=0.$$

With the same arguments used in the proof of Lemma \ref{lemconvgammax}, we obtain, up to a subsequence,  
$$
\lim\limits_{n\to +\infty}t_n=+\infty.
$$ 
Indeed, the only difference between this case and the previous one lies in the fact that $f'(\gamma_n)=0$ while $f'(t_n)$ tends only towards $0$ as $n$ goes to $+\infty$. Then, the only change to be made in order to adapt the arguments used in the proof of Lemma \ref{lemconvgammax} to the present case consists in adding a small $o(1)_{n\to +\infty}$ to the r.h.s. of both \eqref{firstnew} and \eqref{secondnew}.

Next, we remark that 
\begin{equation}\label{eqlimsupfntn}
\limsup\limits_{n\to+\infty}f_n(t_n)\le \limsup\limits_{n\to+\infty}f_n(S_n)<0\,.
\end{equation}
Indeed, since $S_n\le t_n\le \gamma_n$ for all $n\in \mathbb{N}$  and by the definition of $\gamma_n$, we have $f_n(\gamma_n)<f_n(t_n)<f_n(S_n)$ for all $n\in \mathbb{N}$. It remains to prove that $\limsup\limits_{n\to+\infty}f_n(S_n)<0$. Using the second equation of the system (\ref{eqradbis}), we have
$$
S_n f_n(S_n) \le \int_0^{S_n}f_n(r)\,dr= \int_0^{S_n}\frac{g'_n(r)}{1-g^2_n(r)}\,dr=\frac{1}{2}\log\frac{1+x_n^2}{(1+x_n)^2}\le \delta <0  \,,
$$
since $\frac{1+x_n^2}{(1+x_n)^2}<1\,$ uniformly. As a consequence, and using the fact that $\{S_n\}$ is bounded, we find
$$
\limsup\limits_{n\to+\infty}f_n(S_n)<0.
$$ 
Therefore, by the equation satisfied by $g_n$, $\lim\limits_{n\to +\infty}g'_{n}(t_n)=0\;$ implies $\,\lim\limits_{n\to +\infty}g_{n}(t_n)=1$. Hence, using the first equation of (\ref{eqradbis}) and the properties of convergence of $\{f'_n(t_n)\}_n$ and $\{t_n\}_n$, we obtain, up to a subsequence, $\lim\limits_{n\to +\infty}f^2_{n}(t_n)=a-b$. As a conclusion, $\lim\limits_{n\to +\infty}f_{n}(t_n)=-\sqrt{a-b}\,$ since $\,f_n(t_n)<0$.

Next, let us set $(w_n, z_n)=(f_n(t_n+\cdot),g_n(t_n+\cdot))$. We have 
\begin{align*}
&\lim\limits_{n\to+\infty}w_n(0)=-\sqrt{a-b}\,,\\
&\lim\limits_{n\to+\infty}z_n(0)=1.
\end{align*}
Let $(w,z)$ be the solution of (\ref{eqradconsbis}) with initial data $(-\sqrt{a-b}\,,1)$. Since $(-\sqrt{a-b}\,,1)$ is a rest point of (\ref{eqradcons}), $w(r)=-\sqrt{a-b}$ and $z(r)=1$ for all $r\in[0,+\infty)$. Hence, thanks to the condition $a-2b>0$, we obtain
\begin{equation}\label{eqsolconsinequality}
-w(r)>\sqrt{\frac{a}{2}}z(r)\,,
\end{equation}
for all $r\in [0,+\infty)$.
On the other hand, applying Lemma \ref{lemconvconssol}, and since $\lim\limits_{n\to+\infty}t_n=+\infty$, we have that for all $r\in(0,+\infty)$, $(w_n,z_n)$ converges to $(w,z)$ uniformly on $[0,r]$. Therefore for $n$ large enough and for all $r\in(0,+\infty)$, we have 
$$
f_n(t_n+r)+\sqrt{\frac{a}{2}}g_n(t_n+r)<0 \,,
$$
which contradicts Lemma \ref{lempropertiesI2}. This proves the lemma.
\end{proof}

\noindent\emph{Step $3$.} Conclusion and proof of Theorem \ref{thmexistence}.\\

Let $(\tilde f, \tilde g)=(f_{\tilde x},g_{\tilde x})$ with $\tilde x= \sup I $. Thanks to Lemma \ref{lemsupsmaller1} and by the definition of $I$, $\sqrt{\frac{b}{a}}<\tilde x < 1$. Hence, by Lemma \ref{lempropertiessol}, we have $R_{\tilde x}=+\infty$.
Since $\tilde x>\sqrt{\frac{b}{a}}$, we have $\tilde f'(0)<0$, and thus $\tilde f<0$ and $\tilde g>0$ in a neighborhood of $0$.
Moreover, since the set $I$ is open, $\tilde x \notin I$ and $\tilde f$ cannot vanish before $\tilde g$ does. 

On the other hand, $\tilde x \in \bar I$. Then,  $\tilde g$ cannot vanish before $\tilde f$ because of the continuous dependence of $(f_x,g_x)$ with respect to $x$. 
Finally, $\tilde f$ and $\tilde g$ cannot vanish simultaneously because $(0,0)$ is a rest point of (\ref{eqradbis}).
Therefore, $\tilde f(r)<0$ and $\tilde g(r)>0$ for all $r\in(0,+\infty)$. Then, using Lemma \ref{lemexpdecay},  there exists a constant $C$ such that for all $r\in(0,+\infty)$,
$$
0\le -f_x(r), g_x(r)\le C\exp(-K_{a,b}r)\,,
$$
with $K_{a,b}=\min\left\{\frac{b}{2},\frac{2a-b}{2a}\right\}$.
${}\hfill\square$

\section{Numerical aspects and qualitative properties of ground states}\label{numerics}

As it has been observed in computations carried out in physics, the fields of mesons $\sigma$ and $\omega$ are proportional to the scalar and the vector density. In models for finite nuclei these fields approximately assume a ``plateau"-like Saxon--Woods shape: they vanish outside the nucleus and they are more or less constant inside it.  Moreover, the intensity of the potential for the antinucleons $V-S$ is much more higher than that of the potential for the nucleons $V+S$ (see for instance \cite{ring}).

We have run some numerical calculations for our model, trying to see how the values of the parameters in the problem affect the shapes of those fields and the intensity of the potentials. We observe that the results depend a lot on the values of $a$ and $b$, which are related to the physical values of the meson masses and of the coupling constants. More precisely, we remark that even in this very particular case where only one nucleon is taken into account, the Saxon--Woods shape for the potentials $V$ and $S$ is perfectly observable and the magnitude of $|V-S|$ is much larger than that of $|V+S|$, if the parameters $a$ and $b$ are well chosen.

%In our computations, we also observe this characteristic shape for the fields $S$ and $V$, and it is interesting to look also at the shapes of the potentials for nucleons $V+S$ and for anti-nucleons, $V-S$. It has been also observed that the intensity of the latter is much higher than that of the former one.

%We have run some numerical calculations for our model, trying to see how the values of the parameters in the problem affect the shapes of those fields and potentials. We observe that the results depend a lot on the values of $a$ and $b$, which are related to the physical values of the meson masses and of the coupling constants.

%We remark that even in this very particular case where only one nucleon is taken into account, the Saxon-Woods shape for the potentials $V$, $S$, $V-S$ and $V+S$ is perfectly observable.

\begin{figure}[h!]
   \begin{minipage}[c]{.47\linewidth}
      \includegraphics{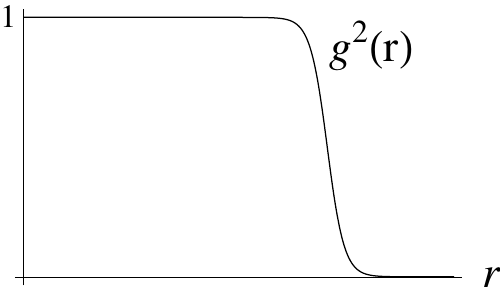}
      \caption{Plot of $g^2$, $\;\frac{2b}{a}\sim 1$}
	\label{Plotg2-1}
   \end{minipage}
    \begin{minipage}[c]{.47\linewidth}
      \includegraphics{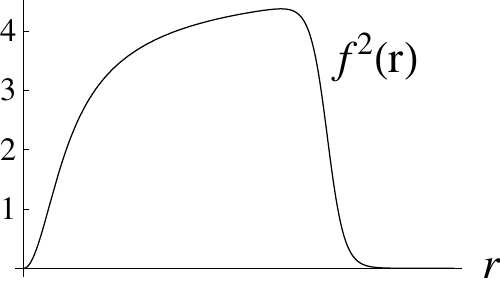}
      \caption{Plot of $f^2$, $\;\frac{2b}{a}\sim 1$}
	\label{Plotf2-1}
   \end{minipage}
   \end{figure}
   
In Figures \ref{Plotg2-1} and \ref{Plotf2-1}, we plot the shapes of $f^{2}$ and $g^{2}$ for the values $a=9$ and $b=4$.
   
The ``plateau"-like Saxon-Woods shape is perfectly clear for $g^{2}$. For $f^{2}$ it is not, but taking into account that as $c$ goes to $+\infty$,  $S \sim -mc^2g^2 +f^2/(4m)$  and $V\sim mc^2g^2-a g^2/(2m)+ f^2/(4m)$, the properties discussed above for $V$, $S$, $V+S$ and $V-S$ are verified here in a very straightforward way. Indeed, the above asymptotics show that $V$ and $S$ behave like a plateau if $g^{2}$ does, and the intensity of $|V-S|$ is much higher than that of $|V+S|$.

A less clear case is the next one, see figures  \ref{Plotg2-2} and \ref{Plotf2-2}, where the values of $a$ and $b$ are respectively of $4$  and $1$. Here the ``plateau" is much less visible and its edge are much less sharp. Practically there is no ``plateau" in this case.

\begin{figure}[h!]
   \begin{minipage}[c]{.47\linewidth}
      \includegraphics{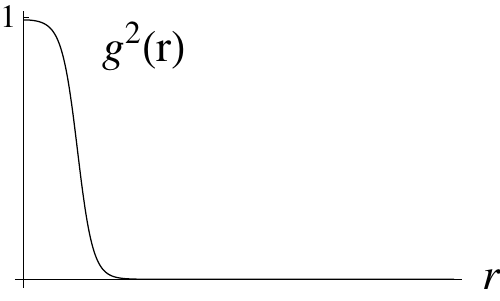}
      \caption{Plot of $g^2$, $\;\frac{2b}{a}\not\sim 1$}
	\label{Plotg2-2}
   \end{minipage}
    \begin{minipage}[c]{.47\linewidth}
      \includegraphics{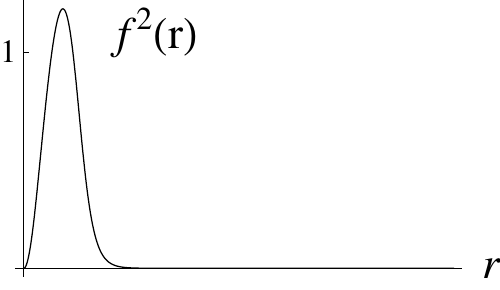}
      \caption{Plot of $f^2$, $\;\frac{2b}{a}\not\sim 1$}
	\label{Plotf2-2}
   \end{minipage}
\end{figure}

Actually, other computations that we have run show that the ``plateau" shape is more and more present when $2b/a$ approaches $1$.

\subsection*{Acknowledgment}
This work was partially supported by the Grant ANR-10-BLAN 0101 (NONAP) of the French Ministry of Research.

%\bibliographystyle{article_simona}
%\bibliography{Nonrelativistic_limit}

\end{document}